\documentclass[11pt, letterpaper,reqno]{amsart}

\bibliographystyle{apalike}
\usepackage{graphicx}
\usepackage[linktocpage=true,colorlinks,citecolor=blue,linkcolor=blue,urlcolor=black]{hyperref}
\usepackage{fullpage}
\usepackage{amssymb}
\usepackage{cite}
\usepackage{amsmath}
\usepackage{latexsym}
\usepackage{amscd}
\usepackage{tikz}
\usepackage{amsthm}
\usepackage{mathrsfs}
\usepackage{url}
\usepackage[utf8]{inputenc}
\usepackage[english]{babel}
\usepackage{amsfonts}
\usepackage{todonotes}
\setlength{\marginparwidth}{2cm}
\usepackage{mathtools}
\usepackage{verbatim}
\usepackage[justification=centering]{caption}
\usepackage{enumerate}

\vfuzz2pt 
\hfuzz2pt 

\numberwithin{equation}{section}
\setcounter{section}{0}


\def\R{\mathbb R}
\def\Z{\mathbb Z}

\def\mc{\mathcal C}

\def\E{\mathbb E}

\theoremstyle{plain}
\newtheorem{theorem}{Theorem}

\newtheorem{lemma}[theorem]{Lemma}

\newtheorem{prop}[theorem]{Proposition}
\newtheorem*{theorem*}{Theorem}

\newtheorem{corollary}[theorem]{Corollary}
\newtheorem{conjecture}[theorem]{Conjecture}

\theoremstyle{remark}

\theoremstyle{definition}

\theoremstyle{remark}

\numberwithin{equation}{section}

\begin{document}
\title{Monochromatic components with many edges}

\author{David Conlon}
\address{Department of Mathematics, California Institute of Technology, Pasadena, CA 91125, USA}
\email{dconlon@caltech.edu}
\thanks{Conlon was supported by NSF Award DMS-2054452, Luo by NSF GRFP Grant DGE-1656518, and Tyomkyn by ERC Synergy Grant DYNASNET 810115 and GA\v{C}R Grant 22-19073S}

\author{Sammy Luo}
\address{Department of Mathematics, Stanford University, Stanford, CA 94305, USA}
\email{sammyluo@stanford.edu}
\thanks{}

\author{Mykhaylo Tyomkyn}
\address{Department of Applied Mathematics, Faculty of Mathematics and Physics, Charles University, 11800 Prague, Czech Republic}
\email{tyomkyn@kam.mff.cuni.cz}
\thanks{}

	\maketitle

\begin{abstract}
Given an $r$-edge-coloring of the complete graph $K_n$, what is the largest number of edges in a monochromatic connected component? This natural question has only recently received the attention it deserves, with work by two disjoint subsets of the authors resolving it for the first two special cases, when $r = 2$ or $3$. Here we introduce a general framework for studying this problem and apply it to fully resolve the $r = 4$ case, showing that any $4$-edge-coloring of $K_n$ contains a monochromatic component with at least $\frac{1}{12}\binom{n}{2}$ edges, where the constant $\frac{1}{12}$ is optimal only when the coloring matches a certain construction of Gyárfás.
\end{abstract}

\section{Introduction}

Given an $r$-coloring of the edges of the complete graph $K_n$, how large is the largest monochromatic connected component? A partial answer to this question was provided in 1977 by Gy\'arf\'as~\cite{gyarfas1977partition}, who showed that any such $r$-coloring always contains a monochromatic connected component with at least $n/(r-1)$ vertices. Moreover, this estimate is best possible whenever $r-1$ is a prime power and $n$ is a multiple of $(r-1)^2$. An alternative proof of this result, as a simple corollary of his fractional version of Ryser’s conjecture, was later found by F\"uredi~\cite{furedi81ryser, furedi89covering}, who also showed that if there is no affine plane of order $r-1$, then the bound can be improved to $(r-1)n/(r^2 - 2r)$. 

There are many variants of this question. For instance, what happens when the complete graph $K_n$ is replaced by another graph, say a subgraph of the complete graph~\cite{gyarfas2017mindeg} or the complete bipartite graph~\cite{debiasio2020bipartite} of high minimum degree? Or what happens when we insist that our component has small diameter~\cite{Ruszinko2012diameter}? 
Here we will be concerned with another variant, a rather basic one which has received surprisingly little attention in the literature, namely, given an $r$-coloring of $K_n$, what is the largest number of \emph{edges} in a monochromatic connected component?

This question was first raised by Conlon and Tyomkyn~\cite{conlon2021ramsey} because of its close relation with another problem, that of determining the Ramsey number for trails and circuits. However, the components problem is arguably the more fundamental question. If we write $M(n, r)$ for the largest natural number such that every $r$-coloring of $K_n$ contains a monochromatic connected component with at least $M(n, r)$ edges, then the main result of~\cite{conlon2021ramsey} may be interpreted as saying that $M(n, 2) = \frac{2}{9} n^2 + o(n^2)$. In fact, a more careful analysis of their argument implies that $M(n, 2) \geq \frac{1}{9} (2n^2 - n - 1)$, with, where divisibility allows it, the example consisting of two disjoint red cliques of orders $\frac{2n+1}{3}$ and  $\frac{n-1}{3}$ with all blue edges between showing that this is best possible.

To say something about the general case, we first look at Gy\'arf\'as' construction of $r$-colorings where each monochromatic component has at most $n/(r-1)$ vertices. As noted earlier, his construction, which relies on the existence of the affine plane of order $r-1$, works when $r-1$ is a prime power and $n$ is a multiple of $(r-1)^2$. Concretely, the affine plane of order $r-1$ corresponds to a copy of $K_{(r-1)^2}$ together with $r$ different decompositions of this graph into $r-1$ vertex-disjoint copies of $K_{r-1}$ (that is, $r$ different $K_{r-1}$-factors) with the property that any edge is contained in exactly one of the $r(r-1)$ copies of $K_{r-1}$. By giving the edges in the $i$th $K_{r-1}$-factor color $i$, we obtain an $r$-coloring where every monochromatic component has at most $r-1$ vertices. Moreover, when $n$ is a multiple of $(r-1)^2$, we can simply blow up this coloring 
to obtain an $r$-coloring where every monochromatic component has at most $n/(r-1)$ vertices.

As noted by Conlon and Tyomkyn~\cite{conlon2021ramsey}, essentially the same construction works in the edge case to show that, when $r-1$ is a prime power, there are $r$-colorings where each monochromatic component has at most $(\frac{1}{r(r-1)} + o(1))\binom{n}{2}$ edges (the only caveat is that we should use each color roughly the same number of times within each of the blown-up vertices).
They also showed that this bound is correct up to a constant and conjectured that, for $r = 3$, it is asymptotically tight. That this is the case was verified by Luo~\cite{luo2021connected}, who proved that $M(n, 3) = \lceil \frac{1}{6} \binom{n}{2}\rceil$ for $n$ sufficiently large. Moreover, by giving a tight lower bound for the largest number of edges in a connected component in a graph of given density, he was able to show that $M(n, r) \geq \frac{1}{r^2} \binom{n}{2}$ in the general case, a result which was later strengthened to  $M(n, r) \geq \frac{1}{r^2-r+\frac{5}{4}} \binom{n}{2}$ in a revised version of the paper. Our concern here will be with the following conjectural improvement to this bound.

\begin{conjecture} \label{conj:main}
For any natural numbers $n$ and $r$ with $r \geq 3$, 
\[M(n, r) \geq \left\lceil \frac{1}{r(r-1)} \binom{n}{2} \right\rceil.\]
Moreover, when there is no affine plane of order $r-1$, there exists a constant $\varepsilon_r > 0$ such that
\[M(n,r) \geq \left( \frac{1}{r(r-1)} + \varepsilon_r \right) \binom{n}{2}.\]
\end{conjecture}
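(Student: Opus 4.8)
The plan is to establish both statements for $r\le 4$ -- as far as the method reaches -- through one mechanism. Suppose for contradiction that $K_n$ carries an $r$-coloring in which every monochromatic component has fewer than $\bigl(\tfrac{1}{r(r-1)}+\delta\bigr)\binom n2$ edges, with $\delta=0$ for the first assertion and $\delta=\varepsilon_r$ (fixed later) for the second. The first step is to show that the monochromatic-component structure is governed by a bounded number of parameters: no color class has many vertex-disjoint components that are large in edges (a large-in-edges component is large in vertices), so after isolating the large monochromatic components -- boundedly many of them, independent of $n$ -- and recording a few aggregate edge-masses for the remainder (the residual mass of each color inside, and between, the vertex sets of these large components), the quantity ``size of the largest monochromatic component, normalized by $\binom n2$'' is squeezed between functions of finitely many variables. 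Morally this says a near-extremal coloring is a bounded blow-up of an $r$-coloring of a small complete graph, of which Gy\'arf\'as' affine-plane construction will be the relevant one; making the ``morally'' rigorous is part of the work.

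\textbf{The optimization, fed by the $(r-1)$-color case.} The constraints on these parameters are, first, the bookkeeping identities (total edge-mass $\binom n2$; each color's components partition $V$; each large component is connected, hence not too small in vertices) and, second -- the decisive input -- inequalities obtained by \emph{contracting colors}: identifying two of the $r$ colors produces a complete $(r-1)$-colored graph on the same vertex set, so the bound on $M(\cdot,r-1)$ -- Luo's theorem $M(n,3)=\tfrac16\binom n2$, and $M(n,2)=\tfrac29 n^2+o(n^2)$ -- forces a connected subgraph in two of the original colors carrying a definite fraction of the edge-mass; since every genuine monochromatic component is assumed to carry less than $\bigl(\tfrac{1}{r(r-1)}+\delta\bigr)\binom n2$, that subgraph must split into several monochromatic components, whose masses one then tracks across all such contractions. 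This yields a finite linear program, and the claim $M(n,r)\ge\tfrac{1}{r(r-1)}\binom n2$ becomes: its optimum equals $\tfrac{1}{r(r-1)}$, the level-$r$ program being fed inductively by the answer at level $r-1$. For $r=4$ this program is explicit; solving it -- ruling out, by a many-branch analysis, every way the four component-partitions of $V$ could interlock to beat $\tfrac1{12}$ -- is where most of the effort goes.

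\textbf{Equality and the $\varepsilon_r$ term.} A stability analysis of the same program identifies its extremizers: a feasible configuration attaining $\tfrac{1}{r(r-1)}$ must come from a resolvable design -- for $r=4$, the $K_3$-factorization of $K_9$, i.e.\ the affine plane of order $3$ -- blown up with equal weights, which is exactly Gy\'arf\'as' coloring. With $\delta=0$ this delivers both the inequality $M(n,r)\ge\tfrac{1}{r(r-1)}\binom n2$ and the claim, for $r=4$, that equality forces Gy\'arf\'as' coloring. When there is no affine plane of order $r-1$ this extremal configuration is infeasible; since it was the unique extremizer, the finite optimum is then strictly above $\tfrac{1}{r(r-1)}$, and compactness of the (finite-dimensional, closed) feasible region turns this into a concrete $\varepsilon_r>0$. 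The $o(n^2)$ slack from the first step is then absorbed for $n$ large.

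\textbf{Main obstacle.} The difficulty is the interface between the two steps: one must make sure that passing from the coloring to its bounded list of parameters never silently discards near-extremal behaviour -- the handling of the residual masses and of edges inside a large component is where this could go wrong -- and then survive the combinatorial explosion in the $r=4$ case analysis. Even with the $r=3$ theorem available, ruling out \emph{every} near-extremal four-coloring except the $K_9$-factorization is delicate, and it is this uniqueness/stability statement, rather than the bare inequality, that carries the weight. It is also the step one does not expect to push through routinely for $r\ge 5$: the level-$r$ program grows quickly and the existence of an affine plane of order $r-1$ is itself unsettled, so realistically this is a proof for $r\le 4$ rather than the full conjecture.
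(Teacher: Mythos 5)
The statement you address is Conjecture~\ref{conj:main}, which the paper does \emph{not} prove; it is stated as an open conjecture, and the paper's actual contribution (Theorem~\ref{thm:4main}) is the special case $r=4$, together with the stability statement that equality forces Gy\'arf\'as's configuration. You acknowledge this, aiming only for $r\le 4$, so the real comparison is with the paper's proof of Theorem~\ref{thm:4main}. There your route differs genuinely from the paper's. The paper does not contract colors or appeal to $M(n,r-1)$ at all: its engine is Proposition~\ref{prop:mainbound}, a direct averaging bound showing that if a (fractional) collection $\mathcal{X}$ of components covers the vertex set to weight $\gamma n$ and uses total weight $x$, then $z(r-\gamma)^2\geq\max(1-xz,0)^2$, because the residual uncovered graph is dense and must contain a dense connected piece. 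The $r=4$ proof then classifies the possible component structures via bespoke lemmas (three components covering $V$, disjoint components of different colors, biconnected vertices) and feeds each structure into Corollary~\ref{cor:fracbound}, with the $12$-component Gy\'arf\'as configuration emerging as the unique survivor. This is a one-shot argument in the fixed $r$-coloring; there is no induction on $r$.

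The gap in your proposal is the decisive input, the color-contraction inequality. Merging colors $1$ and $2$ and invoking $M(n,3)\geq\frac16\binom n2$ yields a connected subgraph $H$ in colors $1\cup 2$ with $\geq\frac16\binom n2$ edges, but you have no a priori bound on how many color-$1$ and color-$2$ components $H$ contains. If it contains $k$ of them, you only get $z\geq\frac{1}{6k}$, and nothing in the contraction forces $k\leq 2$. In Gy\'arf\'as's $K_9$-blow-up, $H$ carries $\frac12\binom n2$ edges and splits into $6$ pieces of mass $\frac1{12}$ each; in other colorings it could split differently, so the constraint you'd feed the LP is not a clean linear inequality in the mass variables, it is itself a structural statement requiring the kind of case analysis the paper does via Lemma~\ref{lem:2colorbip}, Lemma~\ref{lem:case3comps}, and Lemma~\ref{lem:bic}. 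Likewise, the reduction ``a near-extremal coloring is a bounded blow-up of a coloring of a small complete graph'' is the crux, not a preliminary; the paper does not make this reduction but instead extracts the $V_{ij}=R_i\cap O_j$ equivalence-class structure directly at the end. Without a concrete mechanism replacing Proposition~\ref{prop:mainbound}, the proposal does not reach $\frac{1}{12}$, let alone the uniqueness and $\varepsilon$-stability claims.
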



The result of \cite{luo2021connected} proves this conjecture when $r = 3$, while the result of~\cite{conlon2021ramsey} shows that the conjectured bound does not extend to the case $r=2$.
Our main result is a proof of the next open case, when $r = 4$. Note that, in this case, Gy\'arf\'as' construction corresponds to a $4$-coloring of $K_9$ where each color class is the union of three vertex-disjoint triangles. In the statement below, by saying that a coloring matches Gyárfás' construction, we mean that the set of components and the intersection pattern of their vertex sets match those in this construction.

\begin{theorem} \label{thm:4main}
In every $4$-coloring of the edges of $K_n$, there is a monochromatic component with at least $\frac{1}{12} \binom{n}{2}$ edges. That is, $M(n, 4) \geq \lceil \frac{1}{12} \binom{n}{2} \rceil$. Moreover, unless the coloring matches Gyárfás's construction, there is a monochromatic component with at least $\left(\frac{1}{12}+\varepsilon\right)\binom{n}{2}$ edges, where $\varepsilon=\frac{2}{14+\sqrt{96}}-\frac{1}{12}>0.0007$.
\end{theorem}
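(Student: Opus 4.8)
The plan is to combine three tools. The first is Gyárfás's theorem, which guarantees a monochromatic component on at least $n/3$ vertices. The second is the classical fact --- a consequence of Tuza's verification of Ryser's conjecture for intersecting $r$-partite $r$-uniform hypergraphs when $r\le5$ --- that the vertices of any $4$-edge-coloured $K_n$ are covered by three monochromatic components. The third is Luo's sharp density bound from the introduction: a graph with $m$ edges on $N$ vertices has a connected component with essentially $m^{2}/\binom N2$ edges, so a colour class of density $\alpha$ contains a monochromatic component with at least $\alpha^{2}\binom n2$ edges (in fact slightly more). Write $\alpha_{1}\ge\alpha_{2}\ge\alpha_{3}\ge\alpha_{4}$ for the four colour densities, so $\sum_{i}\alpha_{i}=1$, and suppose toward the dichotomy that every monochromatic component has fewer than $\bigl(\tfrac1{12}+\varepsilon\bigr)\binom n2=\tfrac1{(1+\sqrt6)^{2}}\binom n2$ edges, noting $\tfrac1{(1+\sqrt6)^{2}}=\tfrac1{7+2\sqrt6}=\tfrac2{14+\sqrt{96}}$. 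If $\alpha_{1}\ge\tfrac1{1+\sqrt6}$, Luo's bound already produces a monochromatic component with at least $\tfrac1{(1+\sqrt6)^{2}}\binom n2$ edges, a contradiction, so we may assume all four densities lie below $\tfrac1{1+\sqrt6}\approx0.29$.

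Next I would bring in the covering result: fix monochromatic components $F_{1},F_{2},F_{3}$, in colours $c_{1},c_{2},c_{3}$, covering $V(K_n)$, and write $|V(F_{i})|=a_{i}n$. The structural engine is the observation that \emph{no edge of colour $c_{i}$ joins $V(F_{i})$ to its complement}, since its endpoints would otherwise lie in two distinct colour-$c_{i}$ components; hence the edge set of each colour class splits into ``blocks'' dictated by the sets $V(F_{i})$, the decomposition being finest when $c_{1},c_{2},c_{3}$ are all distinct and coarsest when they coincide. Two archetypal cases show what this yields. If $c_{1}=c_{2}=c_{3}$, the $V(F_{i})$ are pairwise disjoint (a vertex lies in at most one component per colour), they partition $V(K_n)$, they are all the components of that colour, and every edge of that colour lies inside one $V(F_{i})$; thus $\max_{i}e(F_{i})\ge\tfrac13\alpha_{c_{1}}\binom n2$, which already beats $\tfrac1{12}\binom n2$ once $\alpha_{c_{1}}>\tfrac14$, while if $\alpha_{c_{1}}\le\tfrac14$ some other colour has density above $\tfrac14$ and is forced toward a large component, because the edges between the $V(F_{i})$ now carry a $3$-colouring of a complete tripartite graph. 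If instead $c_{1},c_{2},c_{3}$ are distinct, then for each pair $\{i,j\}$ the edges between $V(F_{i})$ and $V(F_{j})$ avoid colours $c_{i}$ and $c_{j}$, and one must track where each colour, including the ``free'' fourth colour, can be placed; the possible overlap patterns of the $V(F_{i})$ generate the remaining cases.

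This leads to a finite optimisation, which is the heart of the proof. For each configuration --- indexed by the colour-repetition pattern of $(c_{1},c_{2},c_{3})$ and the intersection pattern of the $V(F_{i})$ --- the block decomposition together with Luo's bound gives a lower bound on the largest monochromatic component equal to the maximum of a short list of quadratic expressions in the $a_{i}$ and in the amounts of each colour placed in each block, subject to the constraints that the blocks sum to $\binom n2$ edges, that each $\alpha_{i}<\tfrac1{1+\sqrt6}$, and that each colour $c_{i}$ obeys its confinement. I would then show that the minimum of this optimisation over all configurations equals $\tfrac1{12}\binom n2$, attained exactly when $a_{1}=a_{2}=a_{3}=\tfrac13$, the $F_{i}$ are three disjoint components of one colour, and the other three colours sit between the parts precisely as in the blow-up of the affine plane $\mathrm{AG}(2,3)$ --- that is, the coloring matches Gyárfás's $K_9$ construction; every other configuration carries slack and forces a component with at least $\bigl(\tfrac1{12}+\varepsilon\bigr)\binom n2$ edges. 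The constant $\sqrt6$ (equivalently the denominator $6=\binom42$) enters through the tightest near-extremal configuration not of Gyárfás type, where two competing estimates for the sizes of components in different colours, roughly of the shapes $x^{2}$ and $y^{2}/6$ with $x+y=1$, balance at $x=\tfrac1{1+\sqrt6}$; this pins down the stated value of $\varepsilon$. Finally, a direct check that Gyárfás's construction itself always has a monochromatic component with at least $\lceil\tfrac1{12}\binom n2\rceil$ edges completes the theorem.

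The main obstacle is this optimisation: pushing the case analysis through every colour-repetition and overlap pattern without ever losing the sharp constant $\tfrac1{12}$, and --- for the stability half --- upgrading ``near-equality'' to the exact statement that the set of monochromatic components and their vertex-intersection pattern coincide with those in Gyárfás's construction. The three opening tools are standard, and the reductions in the first two paragraphs are routine once those tools are in hand.
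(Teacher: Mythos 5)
Your proposal takes a genuinely different route from the paper, but it has a real gap that I do not think can be closed with the tools you have in hand. The paper's engine is Proposition~\ref{prop:mainbound}: if a set $\mathcal{X}$ of $x$ components covers $\gamma n$ vertices with multiplicity and none of them has more than $z\binom{n}{2}$ edges, then $z(r-\gamma)^2 \geq \max(1-xz,0)^2$, so some component \emph{outside} $\mathcal{X}$ is large. This is applied (in fractional form) not to colour classes but to carefully chosen families of components, and the entire case analysis (Lemmas~\ref{lem:case3comps}, \ref{lem:disjoint}, \ref{lem:bic}) is organised around feeding it good choices of $\mathcal{X}$. The constant $\tfrac{2}{14+\sqrt{96}}$ falls out of \eqref{eqn:zbound} at $(\gamma,x)=(2,5)$, not from the balance heuristic you describe. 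You have no analogue of this tool: you work exclusively with (i) Gy\'arf\'as's vertex bound, (ii) the Ryser/Tuza $3$-component cover, and (iii) the per-colour density bound $\alpha^2\binom{n}{2}$.

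The gap is that (iii) is simply too weak in the regime you would most need it. You correctly reduce to $\alpha_i < 1/(1+\sqrt 6)$ for all $i$, but in the balanced case $\alpha_i\approx 1/4$ the density bound gives only about $\tfrac1{16}\binom{n}{2}<\tfrac1{12}\binom{n}{2}$, and this is not an artifact of laziness: a single colour class of density $1/4$ need not contain a component with $\tfrac1{12}\binom{n}{2}$ edges (in the Gy\'arf\'as construction it does, but only because \emph{every} colour is forced to). Your proposed rescue --- the ``block decomposition'' induced by the covering components $F_1,F_2,F_3$ plus a finite optimisation --- is not carried out, and the one case you sketch already stalls: when $c_1=c_2=c_3$ and $\alpha_{c_1}\le 1/4$, you assert that the $3$-colouring of the complete tripartite graph between the $V(F_i)$ ``forces'' a large component, but finding a monochromatic component with many edges in a $3$-coloured complete tripartite graph is a problem of essentially the same type and difficulty as the one you are trying to solve, so this step is circular as written. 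The paper avoids this by never needing to bound a single colour class or a single block in isolation: Proposition~\ref{prop:mainbound} aggregates over all remaining colours at once, which is exactly the leverage your outline lacks. Note also that the paper's Lemma~\ref{lem:case3comps} only handles covers by three components of \emph{distinct} colours (and shows that case always wins); the monochromatic-cover case, which is what Gy\'arf\'as's construction realises, is precisely where the real work lies and where your sketch is thinnest. Finally, for the stability half you would need to show that every non-extremal configuration in your optimisation carries slack of at least $\varepsilon\binom n2$; you state this as a goal but give no mechanism, whereas the paper derives it by showing that outside the hypotheses of Lemmas~\ref{lem:case3comps}--\ref{lem:bic} the component structure is rigidly the affine-plane pattern.
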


Our proof of Theorem~\ref{thm:4main} consists of first showing that any $4$-coloring of $K_n$ has one of a bounded number of component structures and then that each such component structure contains a component with enough edges. For instance, one of the possible component structures is that each color has precisely three components. But then one of these $12$ components clearly contains at least $1/12$ of the edges, as required. For the other possible component structures, our arguments are not usually so simple, relying instead on a key observation, Proposition~\ref{prop:mainbound} below. 
This says that if a certain union of components is large in the vertex sense, but none of these components is large in the edge sense, then some one of the remaining components will be large in the edge sense. In fact, even this is not quite enough and, inspired by F\"uredi's approach to the vertex case, we must allow for weighted or fractional unions of components. We will describe our general framework and how it may be applied in more detail in the next section.

\section{A general framework}

Suppose $r\geq 2$ and fix an $r$-coloring of the edges of the complete graph $G=(V,E)\mathrel{\widehat{=}} K_n$. For $1\leq i\leq r$, let $G_i$ be the ``graph of color $i$'', i.e., the subgraph of $K_n$ formed by the edges with color $i$, where we include vertices that are isolated in that color in the graph. In particular, $|G_i|=n$ for all $i$. Let $\mathcal{C}_i$ be the set of connected components in $G_i$ and let $\mathcal{C}$ be the set of all monochromatic components, i.e., $\mathcal{C}=\bigcup_{1\leq i\leq r}\mathcal{C}_i$. Since each vertex of $G$ is in exactly one component of each color, we have $$\sum_{C\in \mathcal{C}}|V(C)|=r|V(G)|=rn.$$ Similarly, since each edge of $G$ is in exactly one component of exactly one color, we have $$\sum_{C\in \mathcal{C}}|E(C)|=|E(G)|= \binom{n}{2}.$$ The following observation will be central to our approach.

\begin{prop}
\label{prop:mainbound}
Let $\mathcal{X}\subseteq \mathcal{C}$ be a set of connected monochromatic components in an $r$-coloring of $K_n$ and let $x=|\mathcal{X}|$. Suppose $\gamma\in [0,r]$ and $z\in \R^+$ are constants such that $\sum_{C\in \mathcal{X}} |V(C)|\geq \gamma n$ and $\max_{C\in \mathcal{C}}|E(C)| \leq z \binom{n}{2}$. Then
\begin{equation}\label{eqn:fracbound} 
    z(r-\gamma)^2\geq \max(1-xz,0)^2.
\end{equation}
\end{prop}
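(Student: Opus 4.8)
The plan is a direct double count. Under the hypothesis that every monochromatic component has at most $z\binom{n}{2}$ edges, I would estimate $\sum_{C\notin\mathcal{X}}|E(C)|$ from above and from below and compare the two. If $xz\geq 1$ the right-hand side of \eqref{eqn:fracbound} is zero and there is nothing to prove, so assume $xz<1$. Then $\gamma<r$: otherwise $\gamma=r$ would force every vertex into a member of $\mathcal{X}$ in each color, hence every edge into a member of $\mathcal{X}$, and then $\binom{n}{2}=\sum_{C\in\mathcal{X}}|E(C)|\leq xz\binom{n}{2}$, a contradiction.

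Now the bookkeeping. Since $\sum_{C\in\mathcal{C}}|V(C)|=rn$ and $\sum_{C\in\mathcal{C}}|E(C)|=\binom{n}{2}$, while $|\mathcal{X}|=x$ and each $C\in\mathcal{X}$ has $|E(C)|\leq z\binom{n}{2}$, I would record
\[\sum_{C\notin\mathcal{X}}|V(C)|\leq (r-\gamma)n\qquad\text{and}\qquad\sum_{C\notin\mathcal{X}}|E(C)|\geq (1-xz)\binom{n}{2}.\]
The crucial input is a sharp per-component density estimate: if $C$ is connected on $v$ vertices, then $|E(C)|\leq\binom{v}{2}$, and combining this with $|E(C)|\leq Z:=z\binom{n}{2}$ gives $|E(C)|/v\leq\min\big(\tfrac{v-1}{2},\tfrac{Z}{v}\big)$; as the first branch increases and the second decreases in $v$, this is at most their common value at the crossover, so $|E(C)|\leq\tfrac14\big(\sqrt{1+8Z}-1\big)|V(C)|=\tfrac14\big(\sqrt{1+4zn(n-1)}-1\big)|V(C)|$ for every component $C$. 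Summing this over $C\notin\mathcal{X}$, substituting the two inventory bounds, isolating the square root and squaring yields, after routine algebra, the intermediate estimate
\[zn(r-\gamma)^2\;\geq\;(1-xz)^2(n-1)+(1-xz)(r-\gamma).\]

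It remains to turn the factor $(n-1)$ into $n$, for which it is enough to show $r-\gamma\geq 1-xz$. This is where a genuinely combinatorial observation is needed. Call a vertex \emph{saturated} if all $r$ of its monochromatic components lie in $\mathcal{X}$, and let $S$ be the set of saturated vertices. Every edge incident to a vertex of $S$ lies in a member of $\mathcal{X}$, so the number of edges meeting $S$, namely $\binom{n}{2}-\binom{n-|S|}{2}$, is at most $\sum_{C\in\mathcal{X}}|E(C)|\leq xz\binom{n}{2}$; since $\binom{n}{2}-\binom{n-|S|}{2}\geq \tfrac{|S|(n-1)}{2}$, this forces $|S|\leq xz\,n$. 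On the other hand $\sum_{C\in\mathcal{X}}|V(C)|=\sum_{v\in V}\lvert\{C\in\mathcal{X}:v\in V(C)\}\rvert\leq r|S|+(r-1)(n-|S|)=(r-1)n+|S|$, so $\gamma n\leq (r-1)n+xzn$ and hence $r-\gamma\geq 1-xz$. Feeding this into the intermediate estimate gives $zn(r-\gamma)^2\geq (1-xz)^2 n$, which is \eqref{eqn:fracbound}.

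The main obstacle, I think, is recognizing that the per-component estimate by itself is not enough: the crude bound $|E(C)|\leq\sqrt{\binom{|V(C)|}{2}\,z\binom{n}{2}}$ loses a factor of $\tfrac{n-1}{n}$ and only proves $z(r-\gamma)^2\geq\tfrac{n-1}{n}(1-xz)^2$. Getting the clean inequality requires both keeping the \emph{sharp} density estimate --- this is what creates the extra term $(1-xz)(r-\gamma)$ --- and finding the auxiliary inequality $r-\gamma\geq 1-xz$ through the saturated-vertex count. I expect the latter, namely the insight that an upper bound on component edge counts automatically caps how much vertex-weight $\mathcal{X}$ can carry, to be the conceptual crux.
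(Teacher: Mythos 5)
Your argument is correct, and it follows a route that is structurally different from the paper's, even though the underlying inputs (the two inventory identities, the bound $|E(C)|\le\binom{|V(C)|}{2}$, and the auxiliary inequality $r-\gamma\ge 1-xz$) are the same. The paper instead averages across the removed subgraphs $H_i$ and then across their components to extract a single component $C'\in\mathcal{C}\setminus\mathcal{X}$ with $\frac{|E(C')|}{|V(C')|}\geq\frac{n-1}{2}\cdot\frac{1-xz}{r-\gamma}$, deduces $\frac{1-xz}{r-\gamma}\leq1$ immediately from $|V(C')|\leq n$, and then reads off the conclusion from $|E(C')|\leq z\binom{n}{2}$. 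You instead apply the sharp per-component cap $|E(C)|\leq\frac{\sqrt{1+8Z}-1}{4}|V(C)|$ to every leftover component and sum; the extra linear term $(1-xz)(r-\gamma)$ that this produces is exactly what lets you upgrade $(n-1)$ to $n$. Both lines of reasoning buy the same clean constant and are mathematically equivalent; the paper's is a little shorter because the averaging step packages the density bound and the inequality $\frac{1-xz}{r-\gamma}\leq1$ simultaneously.

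One remark worth making: you describe the saturated-vertex count, which establishes $r-\gamma\ge 1-xz$, as the conceptual crux. It is correct, but it is also more work than needed. The same fact already follows from the crude density bound $|E(C)|\leq\frac{n-1}{2}|V(C)|$ together with your two inventory bounds:
\[
(1-xz)\binom{n}{2}\;\leq\;\sum_{C\notin\mathcal{X}}|E(C)|\;\leq\;\frac{n-1}{2}\sum_{C\notin\mathcal{X}}|V(C)|\;\leq\;\frac{n-1}{2}\,(r-\gamma)n\;=\;(r-\gamma)\binom{n}{2},
\]
so $1-xz\leq r-\gamma$. This is also essentially how the paper gets it, via $\frac{n-1}{2}\geq\frac{|E(C')|}{|V(C')|}\geq\frac{n-1}{2}\cdot\frac{1-xz}{r-\gamma}$. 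So the saturated-vertex argument, while a nice idea, is a detour rather than the crux; the genuine content of the proposition lives in the density/averaging step.
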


\begin{proof}
If $\gamma=r$, then $\sum_{C\in \mathcal{X}} |V(C)|\geq rn = \sum_{C\in \mathcal{C}} |V(C)|$, so $\mathcal{X}=\mathcal{C}$. Then
\[
xz=\sum_{C\in\mathcal{C}} z \geq \sum_{C\in\mathcal{C}}\frac{|E(C)|}{\binom{n}{2}}=1, 
\]
so both sides of \eqref{eqn:fracbound} are zero. Thus, we can assume $\gamma < r$.

For $1\leq i\leq r$, let $H_i$ be the induced subgraph of $G_i$ on $V(G)\setminus \bigcup_{C\in \mathcal{C}_i\cap \mathcal{X}} V(C)$.  
Then
\[
\sum_{i=1}^r |E(H_i)| = |E(K_n)|-\sum_{C\in \mathcal{X}}|E(C)|\geq \max\left((1-xz) \binom{n}{2},0\right),
\]
while
\[
\sum_{i=1}^r |V(H_i)| = rn-\sum_{C\in \mathcal{X}}|V(C)|\leq (r-\gamma)n.
\]
Note that $|V(H_i)|=0$ implies $|E(H_i)|=0$. Thus, by a standard averaging argument, there is some $j$ with $|V(H_j)|\neq 0$ and
\[
\frac{|E(H_j)|}{|V(H_j)|}\geq \frac{\sum_{i=1}^r |E(H_i)|}{\sum_{i=1}^r |V(H_i)|}\geq \frac{n-1}{2}\max\left(\frac{1-xz}{r-\gamma},0\right).
\]
Let $\mc_j'$ be the set of components of $H_j$, so, by the same averaging argument, there is some $C'\in \mc_j'$ such that
\begin{equation}\label{eq:largecomp}
\frac{|E(C')|}{|V(C')|}\geq \frac{\sum_{C\in \mc_j'}|E(C)|}{\sum_{C\in \mc_j'}|V(C)|}=\frac{|E(H_j)|}{|V(H_j)|}\geq \frac{n-1}{2}\max\left(\frac{1-xz}{r-\gamma},0\right).
\end{equation}
Since $|E(C')|\leq \binom{|V(C')|}{2}$, we have $$\frac{n-1}{2}\geq \frac{|V(C')|-1}{2}\geq \frac{|E(C')|}{|V(C')|},$$
which, combined with~\eqref{eq:largecomp}, implies $\frac{1-xz}{r-\gamma}\leq 1$. We therefore have
\begin{align*}
    z\binom{n}{2}\geq |E(C')|&= \frac{|E(C')|}{|V(C')|} |V(C')|\geq \frac{|E(C')|}{|V(C')|}\left(\frac{2|E(C')|}{|V(C')|}+1\right)  \\
    &=\binom{\frac{2|E(C')|}{|V(C')|}+1}{2}\geq \binom{(n-1)\max( \frac{1-xz}{r-\gamma},0)+1}{2}\\
    &\geq \frac{\max(1-xz,0)^2}{(r-\gamma)^2}\binom{n}{2},
\end{align*}
which rearranges to the desired bound.
\end{proof}

Note that taking $\mathcal{X}=\emptyset$, we have $x=\gamma=0$, so that $z\geq \frac{1}{r^2}$, which immediately yields the simple lower bound $M(n,r)\geq \frac{1}{r^2}\binom{n}{2}$, 
as proved in Corollary~4 of~\cite{luo2021connected}. 
For $1-xz\geq 0$, $x>0$, and $0\leq z\leq 1$, \eqref{eqn:fracbound} is equivalent to
\begin{equation}
\label{eqn:zbound}
z\geq \frac{(r-\gamma)^2 + 2x -\sqrt{((r-\gamma)^2+2x)^2-4x^2}}{2x^2}=\frac{2}{(r-\gamma)^2+ 2x +\sqrt{((r-\gamma)^2+2x)^2-4x^2}}.
\end{equation}
In order to go beyond the bound $M(n,r)\geq \frac{1}{r^2-r+\frac{5}{4}}\binom{n}{2}$ from Theorem~1 of~\cite{luo2021connected}, we must investigate the possible component structures in our $r$-coloring. For instance, in the case $r=3$, the proof of Theorem~2 in~\cite{luo2021connected} shows that there are three possible structures for the components in our coloring. Either: 
\begin{enumerate}[(a)]
    \item Some color has exactly one component ($\gamma=1,x=1$ above),
    \item Each color has exactly two components ($\gamma=3, x=6$), or
    \item There is a component of each color such that every vertex is covered by at least two of these three components ($\gamma=2, x=3$).
\end{enumerate}
Applying Proposition~\ref{prop:mainbound} to cases~(a) and~(c) yields, in each case, a lower bound on $z$ higher than the tight bound of $\frac{1}{6}$, while case~(b), with $r-\gamma=0$, implies $1-xz\leq 0$, so $z\geq \frac{1}{x}=\frac{1}{6}$, which is tight.

\vspace{5mm}

For general $r$, in order to prove a lower bound of the form $M(n,r)\geq z\binom{n}{2}$, it suffices to find, in any given $r$-coloring, a set $\mathcal{X}\subseteq \mathcal{C}$ yielding values of $x\in \Z^+$ and $\gamma\in [0,r]$ such that $x\leq \frac{1}{z}$ and \eqref{eqn:fracbound} does not hold, that is, $z (r-\gamma)^2 \leq (1-xz)^2$.
This rearranges to
\begin{equation}
    \label{eqn:xbound}
    x\leq \frac{1}{z}-\frac{r-\gamma}{\sqrt{z}}.
\end{equation}

We can state our conclusions concisely as follows.

\begin{corollary}
\label{cor:intbound}
Let $z\in \R^+$. If there exists a set of components $\mathcal{X}\subseteq \mathcal{C}$ such that, for some $x\in \Z^+$ and $\gamma\in [0,r]$, $|\mathcal{X}|=x$, $\sum_{C\in \mathcal{X}} |V(C)|\geq \gamma n$, and \eqref{eqn:xbound} holds, then there is a component $C\in \mathcal{C}$ with at least $z \binom{n}{2}$ edges.
\end{corollary}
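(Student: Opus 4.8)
The proof is essentially the contrapositive of Proposition~\ref{prop:mainbound}; the one point that needs care is the boundary case of~\eqref{eqn:xbound}. Suppose, for contradiction, that no $C\in\mathcal C$ has at least $z\binom n2$ edges (we may assume $n\ge 2$, the case $\binom n2=0$ being trivial). Let $z'$ denote the largest value of $|E(C)|/\binom n2$ over all $C\in\mathcal C$. Then $0<z'<z$: the strict upper bound is the contradiction hypothesis, and positivity holds because $K_n$ has at least one edge. Since $\max_{C\in\mathcal C}|E(C)|=z'\binom n2$, Proposition~\ref{prop:mainbound} applies with $z'$ in place of $z$ and the same $\mathcal X$, $x$, $\gamma$, and yields that~\eqref{eqn:fracbound} holds for $z'$, that is, $z'(r-\gamma)^2\ge\max(1-xz',0)^2$.

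It remains to show that~\eqref{eqn:xbound}, together with $z'<z$, forces~\eqref{eqn:fracbound} to \emph{fail} for $z'$, contradicting the above. First dispose of the degenerate case $\gamma=r$: there $\sum_{C\in\mathcal X}|V(C)|\ge rn=\sum_{C\in\mathcal C}|V(C)|$ forces $\mathcal X=\mathcal C$, so $xz'=\sum_{C\in\mathcal C}z'\ge\sum_{C\in\mathcal C}|E(C)|/\binom n2=1$, whence $z'\ge 1/x\ge z$ (using $x\le 1/z$, which follows from~\eqref{eqn:xbound}), contradicting $z'<z$. Now assume $\gamma<r$ and put $g(t)=\tfrac1t-\tfrac{r-\gamma}{\sqrt t}$. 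A one-variable computation shows that $g$ is strictly decreasing on $\bigl(0,(r-\gamma)^{-2}\bigr]$ and nonpositive on $\bigl[(r-\gamma)^{-2},\infty\bigr)$. Since~\eqref{eqn:xbound} reads $x\le g(z)$ and $x\ge 1$, this forces $0<z<(r-\gamma)^{-2}$; as $z'<z$, the value $z'$ also lies in $\bigl(0,(r-\gamma)^{-2}\bigr)$, and strict monotonicity gives $g(z')>g(z)\ge x$. Because $x\le 1/z<1/z'$ we have $1-xz'>0$, so multiplying $x<g(z')$ through by $z'$ and rearranging yields $(r-\gamma)\sqrt{z'}<1-xz'$; squaring, $z'(r-\gamma)^2<(1-xz')^2=\max(1-xz',0)^2$, which is exactly the negation of the inequality Proposition~\ref{prop:mainbound} gave us. This contradiction finishes the proof. (One can avoid the case split by instead quoting~\eqref{eqn:zbound}, which identifies~\eqref{eqn:xbound} as the assertion that the parameter is at most the threshold at which~\eqref{eqn:fracbound} first holds, so that $z'<z$ makes~\eqref{eqn:fracbound} fail for $z'$ at once.)

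The main---essentially the only---obstacle is the boundary of~\eqref{eqn:xbound}, where $x=\tfrac1z-\tfrac{r-\gamma}{\sqrt z}$ exactly: applying Proposition~\ref{prop:mainbound} directly to $z$ would only produce the equality $z(r-\gamma)^2=(1-xz)^2$, which is not a contradiction. Passing first to the true maximum $z'<z$ before invoking the proposition is precisely what upgrades this equality to the strict inequality we need. Everything else is the rearrangement of~\eqref{eqn:fracbound} already recorded in the text, together with the elementary calculus of $t\mapsto\tfrac1t-\tfrac{r-\gamma}{\sqrt t}$.
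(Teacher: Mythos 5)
Your proof is correct and follows the route intended by the text: the corollary is the contrapositive of Proposition~\ref{prop:mainbound}, with \eqref{eqn:xbound} being the algebraic rearrangement of the negation of \eqref{eqn:fracbound}. The one place where you add care beyond the one-line remark preceding the corollary---passing to the true maximum $z' = \max_{C\in\mathcal{C}}|E(C)|/\binom{n}{2} < z$ before invoking the proposition, so that even the boundary case of equality in \eqref{eqn:xbound} produces a strict contradiction---is precisely what is needed to justify the non-strict inequality sign in \eqref{eqn:xbound}, and you handle it correctly.
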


In fact, a simple probabilistic argument allows us to strengthen Corollary~\ref{cor:intbound} to a fractional form allowing for non-integer values of $x$.

\begin{prop}
\label{prop:fracbound}
Let $z\in \R^+$. If there exists a function $w:\mathcal{C}\to[0,1]$ such that, for some $x\in \R^+$ and $\gamma\in [0,r]$, $\sum_{C\in \mathcal{C}}w(C)=x$, $\sum_{C\in \mathcal{C}}w(C)|V(C)|\geq \gamma n$, and \eqref{eqn:xbound} holds, then there is a component $C\in \mathcal{C}$ with at least $z \binom{n}{2}$ edges.
\end{prop}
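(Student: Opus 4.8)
The plan is to deduce Proposition~\ref{prop:fracbound} from its integral counterpart, Corollary~\ref{cor:intbound}, by a single random rounding step. Given the weight function $w:\mathcal{C}\to[0,1]$, I would form a random subset $\mathcal X\subseteq\mathcal C$ by including each component $C$ independently with probability $w(C)$. Linearity of expectation gives $\mathbb E[|\mathcal X|]=\sum_{C\in\mathcal C}w(C)=x$ and $\mathbb E\big[\sum_{C\in\mathcal X}|V(C)|\big]=\sum_{C\in\mathcal C}w(C)|V(C)|\geq\gamma n$. A typical $\mathcal X$ need not simultaneously have $|\mathcal X|$ small and its vertex sum large, so rather than controlling these two quantities separately I would track the single linear functional
\[
Z \;:=\; \frac{1}{n}\sum_{C\in\mathcal X}|V(C)|-\sqrt{z}\,|\mathcal X| \;=\; \sum_{C\in\mathcal C}\mathbf 1[C\in\mathcal X]\left(\frac{|V(C)|}{n}-\sqrt{z}\right),
\]
in which vertex mass and component count are combined with exactly the weights $1$ and $\sqrt z$ that appear in~\eqref{eqn:xbound}.

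Taking expectations yields $\mathbb E[Z]\geq\gamma-\sqrt z\,x$, and multiplying~\eqref{eqn:xbound} through by $\sqrt z$ shows that the hypothesis on $(x,\gamma)$ is equivalent to $\gamma-\sqrt z\,x\geq r-\tfrac1{\sqrt z}$; hence $\mathbb E[Z]\geq r-\tfrac1{\sqrt z}$. If $z\leq 1/r^2$ there is nothing to prove, since the case $\mathcal X=\emptyset$ of Proposition~\ref{prop:mainbound} already produces a component with at least $\tfrac1{r^2}\binom n2\geq z\binom n2$ edges; so I may assume $z>1/r^2$, and then $r-\tfrac1{\sqrt z}>0$. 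Consequently there is an outcome $\mathcal X_0$ with $Z(\mathcal X_0)\geq r-\tfrac1{\sqrt z}>0$, and the positivity of $Z(\mathcal X_0)$ forces $\mathcal X_0\neq\emptyset$.

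To finish, set $x_0:=|\mathcal X_0|$, which is then a positive integer, and $\gamma_0:=\tfrac1n\sum_{C\in\mathcal X_0}|V(C)|$, which lies in $[0,r]$ because $\sum_{C\in\mathcal C}|V(C)|=rn$. By definition $\sum_{C\in\mathcal X_0}|V(C)|=\gamma_0 n$, and rearranging $Z(\mathcal X_0)\geq r-\tfrac1{\sqrt z}$ gives precisely~\eqref{eqn:xbound} for the pair $(x_0,\gamma_0)$. Thus $\mathcal X_0$, $x_0$, $\gamma_0$ satisfy all the hypotheses of Corollary~\ref{cor:intbound}, which delivers a monochromatic component with at least $z\binom n2$ edges, as required. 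The only genuinely delicate point is the bookkeeping that guarantees $x_0$ is a \emph{positive} integer rather than $0$; this is exactly what splitting off the trivial range $z\leq 1/r^2$ handles, and the remainder of the argument is just linearity of expectation together with the algebraic equivalence between~\eqref{eqn:xbound} and the defining inequality for $Z$.
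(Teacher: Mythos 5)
Your proposal is correct and follows essentially the same route as the paper's proof: include each component independently with probability $w(C)$, apply linearity of expectation to a single linear combination of the random variables $x$ and $\gamma$, and pass to an outcome where that combination meets its expectation so that Corollary~\ref{cor:intbound} applies. The quantity you track, $Z=\gamma-\sqrt{z}\,x$, is just a rescaling of the paper's $x+\frac{r-\gamma}{\sqrt{z}}$ (multiply by $-\sqrt{z}$ and add $r$), so the two arguments are identical in substance. The one place where you are more careful than the paper is in ruling out the degenerate outcome $\mathcal{X}_0=\emptyset$: Corollary~\ref{cor:intbound} is stated with $x\in\Z^+$, so one needs $x_0\geq 1$, and your observation that the case $z\leq 1/r^2$ is trivial (via the $\mathcal{X}=\emptyset$ instance of Proposition~\ref{prop:mainbound}), while for $z>1/r^2$ positivity of $Z(\mathcal{X}_0)$ forces $\mathcal{X}_0\neq\emptyset$, cleanly closes that gap.
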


\begin{proof}
Our proof takes advantage of the fact that \eqref{eqn:xbound} is linear in $x$ and $\gamma$. Given a function $w$ with the required properties, we construct a random subset $\mathcal{X}\subseteq \mathcal{C}$ by taking each component $C\in \mathcal{C}$ with probability $w(C)$. Abusing notation by letting $x$ and $\gamma$ be the random variables with $x=|\mathcal{X}|$ and $\gamma=\frac{1}{n}\sum_{C\in \mathcal{X}} |V(C)|$, we have
\begin{align*}
    \E\left(x+\frac{r-\gamma}{\sqrt{z}}\right)&=\sum_{C\in \mathcal{C}}w(C)+\frac{r-\frac{1}{n}\sum_{C\in \mathcal{C}}w(C)|V(C)|}{\sqrt{z}}\\
    &\leq x+\frac{r-\gamma}{\sqrt{z}}\leq \frac{1}{z},
\end{align*}
so there is some choice of $\mathcal{X}$ that satisfies the conditions of Corollary~\ref{cor:intbound}, yielding a component with the desired size.
\end{proof}

In practice, we will apply Proposition~\ref{prop:fracbound} in the following form, allowing us to work with the convenient bound \eqref{eqn:zbound}.

\begin{corollary}
\label{cor:fracbound}
Let $w:\mathcal{C}\to[0,1]$, $x\in \R^+$, and $\gamma\in [0,r]$ and suppose $\sum_{C\in \mathcal{C}}w(C)=x$ and $\sum_{C\in \mathcal{C}}w(C)|V(C)|\geq \gamma n$. Then \eqref{eqn:zbound} holds for $z=\frac{1}{\binom{n}{2}}\max_{C\in \mathcal{C}}|E(C)|$.
\end{corollary}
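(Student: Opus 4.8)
The plan is to read Corollary~\ref{cor:fracbound} off from Proposition~\ref{prop:fracbound} by running in reverse the algebra that turned~\eqref{eqn:fracbound} into~\eqref{eqn:zbound}. Write $z^\ast=\frac{1}{\binom n2}\max_{C\in\mathcal C}|E(C)|$ and let $z_0$ denote the right-hand side of~\eqref{eqn:zbound} for the given $x$, $\gamma$, and $r$; the assertion to be proved is precisely $z^\ast\ge z_0$. Since~\eqref{eqn:zbound} was obtained by solving the quadratic~\eqref{eqn:fracbound} for $z$ at the boundary, $z_0$ is by design the value at which~\eqref{eqn:fracbound} holds with equality; equivalently --- as observed just before Corollary~\ref{cor:intbound}, where~\eqref{eqn:xbound} is identified with the rearrangement of $z(r-\gamma)^2\le(1-xz)^2$ --- I expect~\eqref{eqn:xbound} to hold with equality at $z=z_0$.

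Verifying this is the only real computation. Setting $B=(r-\gamma)^2$, $A=B+2x$, and $D=\sqrt{A^2-4x^2}=\sqrt{B^2+4Bx}$, one has $z_0=2/(A+D)$, hence $1-xz_0=(B+D)/(A+D)\ge 0$ (in particular $xz_0\le 1$), and expanding $(B+D)^2$ with $D^2=B^2+4Bx$ collapses it to $2B(A+D)$; dividing by $(A+D)^2$ gives $(1-xz_0)^2=2B/(A+D)=z_0(r-\gamma)^2$. So $z_0(r-\gamma)^2=(1-xz_0)^2$ together with $xz_0\le 1$, and this pair of conditions is exactly~\eqref{eqn:xbound} at $z=z_0$.

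With~\eqref{eqn:xbound} in hand for $z=z_0$, the function $w$ together with $x$ and $\gamma$ satisfies every hypothesis of Proposition~\ref{prop:fracbound} at this value of $z$ --- the conditions $\sum_{C\in\mathcal C} w(C)=x$ and $\sum_{C\in\mathcal C} w(C)|V(C)|\ge\gamma n$ are exactly what we assumed --- so there is a component $C\in\mathcal C$ with $|E(C)|\ge z_0\binom n2$. Hence $z^\ast\ge z_0$, which is~\eqref{eqn:zbound} for $z=z^\ast$, as required.

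I do not expect a genuine obstacle: the corollary is essentially Proposition~\ref{prop:fracbound} repackaged into the form we want to apply. The only places needing a little care are (i) confirming that the boundary hypotheses under which~\eqref{eqn:fracbound},~\eqref{eqn:xbound}, and~\eqref{eqn:zbound} are interchangeable actually hold at $z_0$, which above reduces to $1-xz_0\ge 0$ and falls straight out of the closed form; and (ii) the degenerate case $x=0$, where $w\equiv 0$ forces $\gamma=0$ and $z_0=1/r^2$, so the claim reduces to the already-noted bound $M(n,r)\ge\frac1{r^2}\binom n2$ coming from $\mathcal X=\emptyset$ in Proposition~\ref{prop:mainbound}.
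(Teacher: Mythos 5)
Your proof is correct and fills in exactly what the paper leaves implicit: the paper states Corollary~\ref{cor:fracbound} as a direct repackaging of Proposition~\ref{prop:fracbound} (``In practice, we will apply Proposition~\ref{prop:fracbound} in the following form\dots''), and your argument --- define $z_0$ as the right-hand side of~\eqref{eqn:zbound}, verify by the $(B,A,D)$ computation that $1-xz_0=(B+D)/(A+D)\ge 0$ and $(1-xz_0)^2=z_0(r-\gamma)^2$ so that~\eqref{eqn:xbound} holds (with equality) at $z=z_0$, then invoke Proposition~\ref{prop:fracbound} at $z=z_0$ to get a component of size at least $z_0\binom{n}{2}$ --- is precisely the intended derivation. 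The algebra checks out, and choosing to apply the proposition at $z_0$ rather than at $z^\ast$ is a good move, since it avoids having to separately handle the case $1-xz^\ast<0$ where the equivalence between~\eqref{eqn:fracbound} and~\eqref{eqn:zbound} is not directly available.
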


Note that the condition $\sum_{C\in \mathcal{C}}w(C)|V(C)|\geq \gamma n$ is satisfied if, for every vertex $v\in V(G)$, we have
\[
\sum_{C\ni v} w(C)\geq \gamma,
\]
i.e., the function $w$ gives a fractional cover of the vertices by components that cover each vertex with weight at least $\gamma$. This allows us to convert our problem into a linear program, akin to F\"uredi's approach to the vertex case, namely, we wish to minimize $x=\sum_{C\in\mathcal{C}} w(C)$ subject to the constraints that $0 \leq w(C) \leq 1$ for all $C$ and $\sum_{C\ni v} w(C)\geq \gamma$
for all $v\in V(G)$. In particular, to show that $z=\frac{1}{r(r-1)}$ works, it would suffice to show that $x\leq r(r-1)-(r-\gamma)\sqrt{r(r-1)}$.

\section{The case $r=4$}

In the case $r=4$, we can investigate the set of possible component structures as in the case $r=3$, though the analysis is now considerably more intricate. Recall that our aim is to show there exists a component with at least $\frac{1}{12}\binom{n}{2}$ edges; moreover, we would like to show that this bound is only asymptotically tight when the components match the extremal configuration described by Gyárfás in \cite{Gyarfas2011Survey}. 

We will make use of the following fact shown in the course of handling the $r=3$ case in~\cite{luo2021connected}. An equivalent result also appears as Lemma~4.19 in \cite{debiasio2021ryser}.

\begin{lemma}
\label{lem:2colorbip}
In any $2$-coloring of the complete bipartite graph between two vertex sets $A_1$ and $A_2$, one of the following holds:
\begin{enumerate}[(a)]
    \item Some color has exactly one component,
    \item Each color has exactly two components, each of which intersects both $A_1$ and $A_2$, or
    \item There is one component of each color such that the intersection of their vertex sets contains one of $A_1$ and $A_2$ and their union contains both.
\end{enumerate}
In each of these cases, one may assign weights of $\frac{1}{2}$ or $1$ to the components involved to get weights summing to at most $2$ that cover every vertex involved to weight at least $1$.
\end{lemma}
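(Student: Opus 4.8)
The plan is to fix a red/blue coloring of the complete bipartite graph between $A_1$ and $A_2$ (both nonempty) and run a case analysis driven by the red components. First I would dispose of case~(a): if either color has exactly one component we are done, so assume from now on that each color has at least two components. The basic structural observation is that any monochromatic component meeting only one of $A_1,A_2$ is a single vertex, since a subgraph of the bipartite graph contained in one side has no edges; hence each red component is either \emph{mixed}, meeting both $A_1$ and $A_2$, or an isolated vertex of $A_1$ or of $A_2$. The other key point is that an edge joining two distinct red components must be blue. Let $m$ be the number of mixed red components; I would split into the cases $m=0$, $m=1$, $m=2$, and $m\ge 3$.

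The cases $m\ge 3$, $m=0$, and ``$m=2$ with an additional non-mixed red component'' are the easy ones. Writing $R_i^1=R_i\cap A_1$ and $R_i^2=R_i\cap A_2$, the fact that all edges between distinct red components are blue lets one chain the sets $R_i^1$ and $R_j^2$ ($i\ne j$) together into a single blue component; carrying this out shows that blue is connected, i.e., we are back in case~(a) (when $m=0$, red is edgeless, so blue is all of $K_{A_1,A_2}$, which is connected). When $m=2$ and $R_1,R_2$ are the only red components, $R_1^1\cup R_2^2$ lies in one blue component $B_1$ and $R_2^1\cup R_1^2$ in another $B_2$; since the only possible blue edges between these two sets would be internal to $R_1$ or $R_2$, hence red, one gets $B_1\ne B_2$, so blue also has exactly two components, each meeting both $A_1$ and $A_2$. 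This is case~(b).

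The delicate case is $m=1$: here $R_1$ is the unique mixed red component, every other red component is an isolated vertex of $A_1$ or of $A_2$, and (since red has at least two components) at least one such isolated vertex exists. If both $A_1$ and $A_2$ contain red-isolated vertices, say $v\in A_1$ and $w\in A_2$, then $v$ is blue-complete to $A_2$, $w$ is blue-complete to $A_1$, and $vw$ is blue, so blue is connected and we are in case~(a). Otherwise, say $A_2$ contains no red-isolated vertex; then $R_1\supseteq A_2$, the red-isolated vertices of $A_1$ all lie in a single blue component $B_0$ (each being blue-complete to the nonempty set $A_2$), and $B_0\supseteq A_2$ as well. Since $A_1$ is partitioned into $R_1^1$ and the red-isolated vertices of $A_1$, this gives $R_1\cup B_0=A_1\cup A_2$ and $R_1\cap B_0\supseteq A_2$, which is precisely case~(c); the case ``$A_1$ has no red-isolated vertex'' is symmetric.

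For the final sentence of the lemma: in case~(a) put weight $1$ on the single component; in case~(b) put weight $\frac12$ on each of the four components, so each vertex (lying in exactly one red and one blue component) is covered to weight $1$ with total weight $2$; in case~(c) put weight $1$ on each of the two components, whose union is all of $A_1\cup A_2$. I expect the main obstacle to be the bookkeeping in the $m=1$ case --- in particular, correctly assigning the various degenerate configurations (isolated-vertex components, and small instances such as $|A_1|=1$) to case~(a) or case~(c) --- rather than any single hard idea.
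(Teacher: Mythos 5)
The paper does not include its own proof of this lemma; it cites the proof of the $r=3$ case in Luo's paper and Lemma~4.19 of DeBiasio--Kamel--McCourt--Sheats as sources. So there is no in-paper proof to compare against. Evaluating your argument on its own terms: the overall plan --- classifying by the number $m$ of ``mixed'' red components and showing $m\ge 3$, $m=0$, and $m=2$ with an extra isolated vertex all force blue to be connected, while $m=1$ (with one side free of red-isolated vertices) yields case~(c) --- is clean and correct, and the weight assignments at the end are verified properly.

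There is, however, one wrong step in the $m=2$ sub-case with no additional red components. You write that ``the only possible blue edges between these two sets would be internal to $R_1$ or $R_2$, hence red, one gets $B_1\neq B_2$.'' But an edge internal to a red component $R_i$ (say between $R_1^1$ and $R_1^2$) need not itself be red: $R_1$ is a maximal \emph{red-connected} set, not a red-complete bipartite subgraph, and such an internal edge is perfectly allowed to be blue so long as $R_1$ remains red-connected via other edges. When such a blue internal edge exists, $B_1=B_2$ and blue has a single component. This does not break the lemma --- $B_1=B_2$ just means you are in case~(a) --- but the sentence as written asserts a false fact, and the correct conclusion of that sub-case is ``either $B_1=B_2$, which is case~(a), or $B_1\neq B_2$, which is case~(b),'' not ``this is case~(b).'' With that one-line fix the proof is sound.
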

\begin{figure}[h]
    \centering
    \includegraphics[scale=0.6]{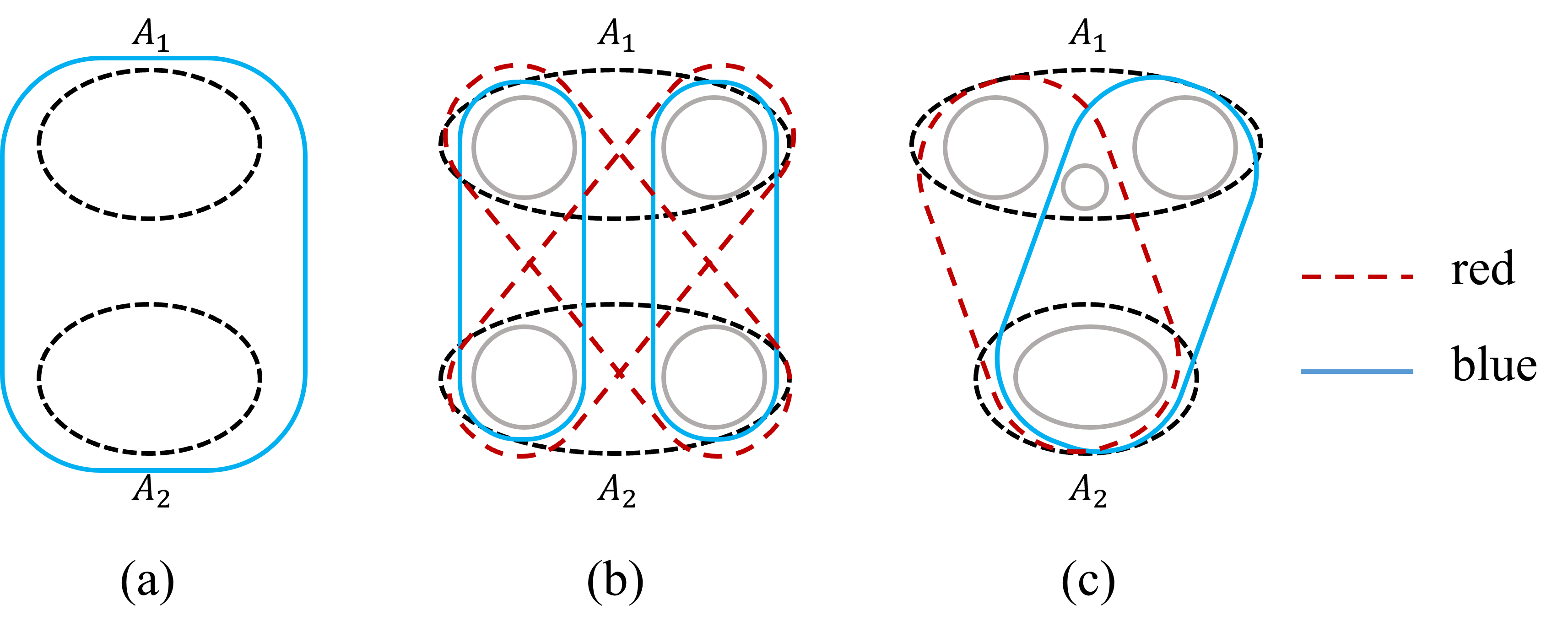}
    \caption{The cases of Lemma~\ref{lem:2colorbip}}
    \label{fig:2colorbip}
\end{figure}

It will sometimes be convenient to specify further subcases within the cases of Lemma~\ref{lem:2colorbip}. We say that a pair of vertex sets $(A_1,A_2)$ ``satisfies case~(a) for color $c$'' if $c$ is a color with exactly one component covering $A_1\cup A_2$. 
We also say that $(A_1,A_2)$ ``satisfies case~(c) directed toward $A_1$'' (or, equivalently, directed away from $A_2$) if $A_1$ is contained within the intersection of the two components. 

Define the $G_i$ and $\mathcal{C}_i$ as before and, for convenience, name the colors red, orange, yellow, and blue. We begin by establishing a lower bound on $\max_{C\in \mc} |E(C)|$ in the following ``degenerate'' case where some three components with distinct colors cover the entire vertex set. 

\begin{lemma}
\label{lem:case3comps}
If, in a $4$-coloring of the edges of $K_n$, there are three components $C_1,C_2,C_3\in \mc$ of distinct colors such that $C_1\cup C_2\cup C_3=V$, then
\[
\frac{\max_{C\in \mc} |E(C)|}{\binom{n}{2}} \geq \frac{2}{14+\sqrt{96}} > \frac{1}{12}.
\]
\end{lemma}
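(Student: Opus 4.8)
The plan is to use the framework of Section~2. With $r=4$, the choice $x=5$, $\gamma=2$ makes $(r-\gamma)^2+2x=14$ and $((r-\gamma)^2+2x)^2-4x^2=96$, so the bound \eqref{eqn:zbound} for this pair reads exactly $z\geq\frac{2}{14+\sqrt{96}}$. Since the right side of \eqref{eqn:zbound} is decreasing in $x$ for fixed $\gamma$, Corollary~\ref{cor:fracbound} reduces the lemma to exhibiting, in any $4$-colouring satisfying the hypothesis, a weighting $w:\mc\to[0,1]$ with $\sum_{C}w(C)\leq5$ and $\sum_{C}w(C)|V(C)|\geq2n$; we may of course assume $z:=\binom{n}{2}^{-1}\max_{C}|E(C)|<\tfrac15$, so that the hypotheses of \eqref{eqn:zbound} are met. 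More generally, by \eqref{eqn:xbound}, any pair $(x,\gamma)$ with $x+(4-\gamma)(1+\sqrt6)\leq7+2\sqrt6$ suffices; convenient special cases are $(4,\gamma)$ with $\gamma\geq\frac{11-\sqrt6}{5}$ and $(6,\gamma)$ with $\gamma\geq\frac{9+\sqrt6}{5}$.

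First I would partition the vertex set. Relabel so that $C_1,C_2,C_3$ are red, orange, yellow, and split $V$ into the seven regions recording which of $V(C_1),V(C_2),V(C_3)$ contain a given vertex; write $A,B,C$ for the three regions lying in exactly one of these sets (so $A\subseteq V(C_1)$, $B\subseteq V(C_2)$, $C\subseteq V(C_3)$) and $M=V\setminus(A\cup B\cup C)$. The structural point is that a red or orange edge between $A$ and $B$ would force a vertex of $A$ or $B$ into the wrong monochromatic component, so the complete bipartite graph between $A$ and $B$ is $2$-coloured in yellow and blue; similarly the $A$–$C$ graph is orange/blue and the $B$–$C$ graph is red/blue. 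Assume WLOG $|A|\geq|B|\geq|C|$ and apply Lemma~\ref{lem:2colorbip} to each of the three pairs.

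The easy case is that at least two of the pairs fall into case~(a) of Lemma~\ref{lem:2colorbip}, supplying single monochromatic components $D_1,D_2$ whose vertex sets cover the unions of two of the pairs; since any two of the three pairs between them cover $A\cup B\cup C$ and overlap in one of $A,B,C$, the weighting $w(C_1)=w(C_2)=w(C_3)=w(D_1)=w(D_2)=1$ has weight at most $5$, covers $M$ to weight $\geq2$ via $C_1,C_2,C_3$, and a short check shows it covers each of $A,B,C$ to weight $\geq2$; thus $\gamma\geq2$ and we are done. A similar one-line argument using only $C_1,C_2,C_3$ together with one case~(a) component covering $A\cup B$ settles the case $|C|\leq\frac{\sqrt6-1}{5}n$, via $(x,\gamma)=(4,2-|C|/n)$.

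The bulk of the proof, and the step I expect to be the main obstacle, is the remaining case, in which at least two of the pairs lie in case~(b) or~(c) of Lemma~\ref{lem:2colorbip} and all of $A,B,C$ are fairly large (so $M$ is small). For such a pair the two-colour structure on its union already wants the full weight budget of $2$, so the third region must be covered essentially for free. Here one uses that each vertex $c$ of the third region joins one of the other regions, say $A$, by an orange or blue edge and the other, $B$, by a red or blue edge: hence either $c$ has a blue edge into $A\cup B$, placing it in one of the blue components produced by Lemma~\ref{lem:2colorbip}, or all edges from $c$ to $A$ are orange (equivalently all to $B$ are red), so $c$ lies in a single orange component containing all of $A$ (resp.\ a single red component containing all of $B$), those bipartite graphs being complete and monochromatic. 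The task is then to combine the blue pieces coming from the various pairs — a blue component of $K_n$ meeting the third region is the same object however one slices it — with at most one such large orange or red component into a weighting of total weight at most $5$, choosing suitably which pair to feed into Lemma~\ref{lem:2colorbip} and, where required, averaging two such weightings by Proposition~\ref{prop:fracbound} or taking up the slack in the pair $(6,\gamma)$ when $M\neq\emptyset$. The tightest configuration, matching the constant $\frac{2}{14+\sqrt{96}}$ exactly, is the perfectly balanced one $|A|=|B|=|C|=n/3$ with $M=\emptyset$, and checking that the added weight stays at $2$ even there is the delicate point.
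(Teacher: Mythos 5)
Your overall framework matches the paper's: reduce via Corollary~\ref{cor:fracbound} to exhibiting a weighting with $(x,\gamma)=(5,2)$, partition $V$ according to which of $V(C_1),V(C_2),V(C_3)$ contain a vertex, observe that the bipartite graph between each pair of the singly-covered regions $A,B,C$ is 2-colored with blue as a common colour, and apply Lemma~\ref{lem:2colorbip} to each pair. The easy case where two pairs satisfy case (a) is also handled correctly.

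However, the proof is not complete, and you say so yourself (``the bulk of the proof... is the remaining case''). The real content of the paper's argument is precisely the case analysis you leave as a sketch: showing that in every constellation of cases (a2), (b), (c) for the three pairs — including subcases determined by the direction of case (c) and by whether the blue components from different pairs coalesce in $K_n$ — one can always find just \emph{two} components beyond $C_1,C_2,C_3$ covering $A\cup B\cup C$. Your sketch gestures at this (blue pieces coalescing, one large orange or red component) but none of the concrete combinations are verified, and your fallback proposals (averaging two weightings, switching to $(6,\gamma)$) are unsubstantiated. There is also a small concrete error in the $|C|\leq\frac{\sqrt6-1}{5}n$ shortcut: the choice $(x,\gamma)=(4,2-|C|/n)$ presupposes a case-(a) component covering $A\cup B$, which is not guaranteed; if the $A$–$B$ pair falls in case (b) or (c) you only get a weight-$2$ cover of $A\cup B$ from Lemma~\ref{lem:2colorbip}, pushing $x$ to $5$ and making this subcase no easier than the general one. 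Finally, the degenerate situations in which some of $A,B,C$ are empty (i.e.\ when one or two of the $C_i$ already cover $V$) need separate handling, as the paper does at the start of its proof; your partition-based setup tacitly assumes all three are nonempty.
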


\begin{proof}
If there is one component $C_1$ with $V(C_1)=V$, then applying Proposition~\ref{prop:mainbound} with $\mathcal{X}=\{C_1\}$ and $(\gamma,x)=(1,1)$ yields, via \eqref{eqn:zbound}, the bound
\[
z\geq \frac{2}{11+\sqrt{117}} > \frac{2}{22} > \frac{2}{14+\sqrt{96}},
\]
as desired.

Next, suppose that there are two components $C_1,C_2$ with $C_1\cup C_2=V$, but with both $V_1 \coloneqq C_1\setminus C_2$ and $V_2 \coloneqq C_2\setminus C_1$ non-empty. Since we have not yet distinguished the colors in any way, we can assume without loss of generality that $C_1$ is red and $C_2$ is orange. Then all edges between $V_1$ and $V_2$ are either yellow or blue. Applying Lemma~\ref{lem:2colorbip} to the pair of vertex sets $(V_1,V_2)$ yields a way to choose weights on the yellow and blue components summing to at most $2$ such that every vertex in $V_1\cup V_2$ is covered by components with weights summing to at least $1$. Starting from this choice of weights and adding a weight of $1$ to each of $C_1$ and $C_2$ then allows us to apply Corollary~\ref{cor:fracbound} with $(\gamma,x)=(2,4)$, which yields the bound 
\[
z\geq \frac{2}{12+\sqrt{80}} > \frac{2}{21} > \frac{2}{14+\sqrt{96}},
\]
which again suffices.

The remaining case to consider is where there are three components $C_1,C_2,C_3$ with $C_1\cup C_2\cup C_3=V$, but all of $V_1\coloneqq C_1\setminus (C_2\cup C_3)$, $V_2\coloneqq C_2\setminus (C_3\cup C_1)$, and $V_3\coloneqq C_3\setminus (C_1\cup C_2)$ are non-empty. Without loss of generality, we can assume that $C_1$ is red, $C_2$ is orange, and $C_3$ is yellow. For each pair of the $V_i$, only two colors are possible on the edges between them, one of which is blue. Our aim is to apply Lemma~\ref{lem:2colorbip} to each of these pairs and then combine the results into an appropriate choice of weights on the components of the overall coloring of $K_n$.

\begin{figure}[h]
    \centering
    \includegraphics[scale=0.6]{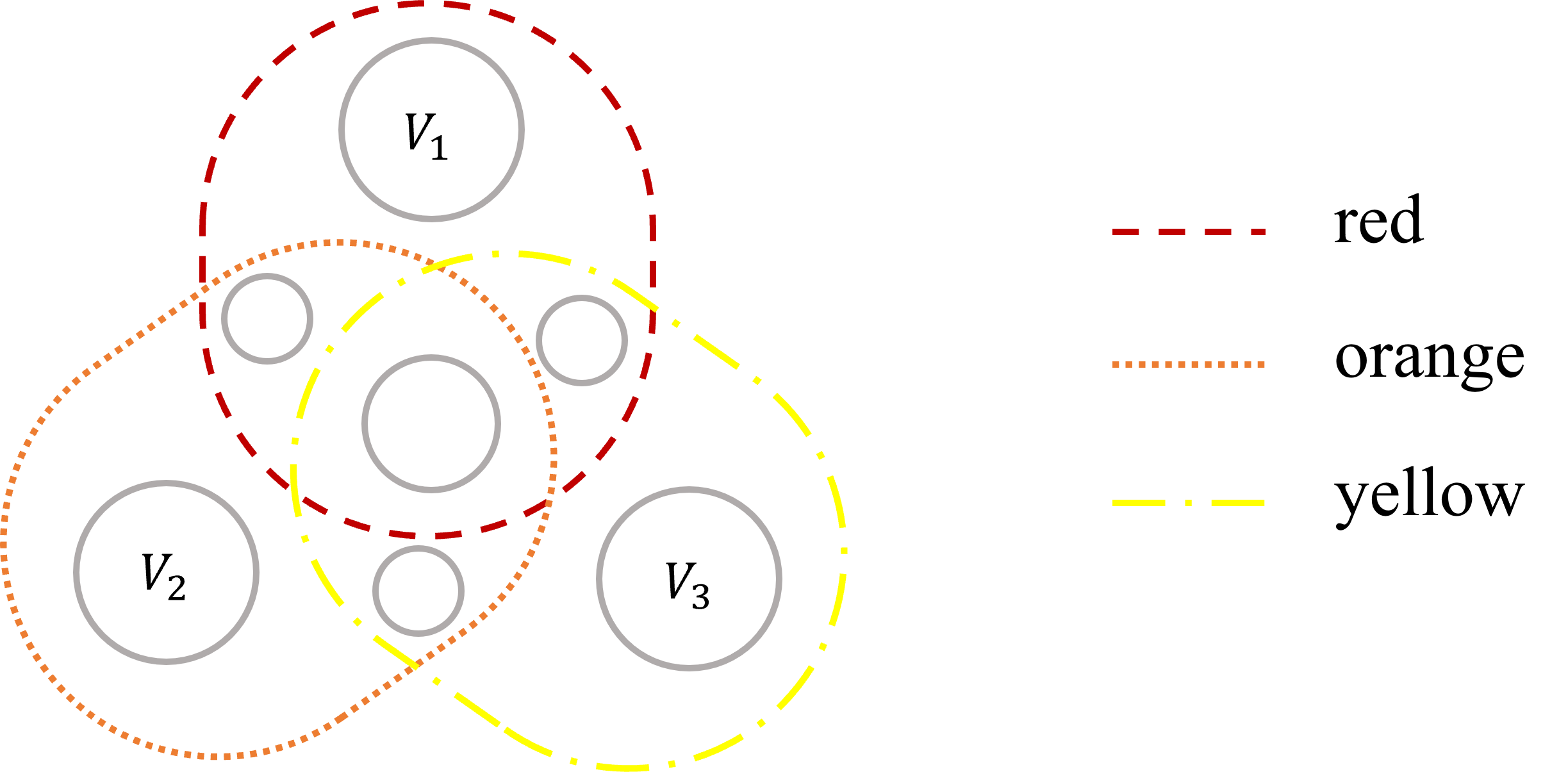}
    \caption{The vertex partition in Lemma~\ref{lem:case3comps}}
    \label{fig:3complemma}
\end{figure}

For each of the pairs $(V_i,V_j)$, we apply Lemma~\ref{lem:2colorbip} to the complete bipartite graph between $V_i$ and $V_j$, yielding one of the cases (a), (b), or (c) described therein. We further split case (a) into (a1) and (a2), depending on whether the corresponding component is blue (case (a1)) or not (some cases are not mutually exclusive, but this will not be an issue).

\begin{figure}[h]
    \centering
    \includegraphics[scale=0.55]{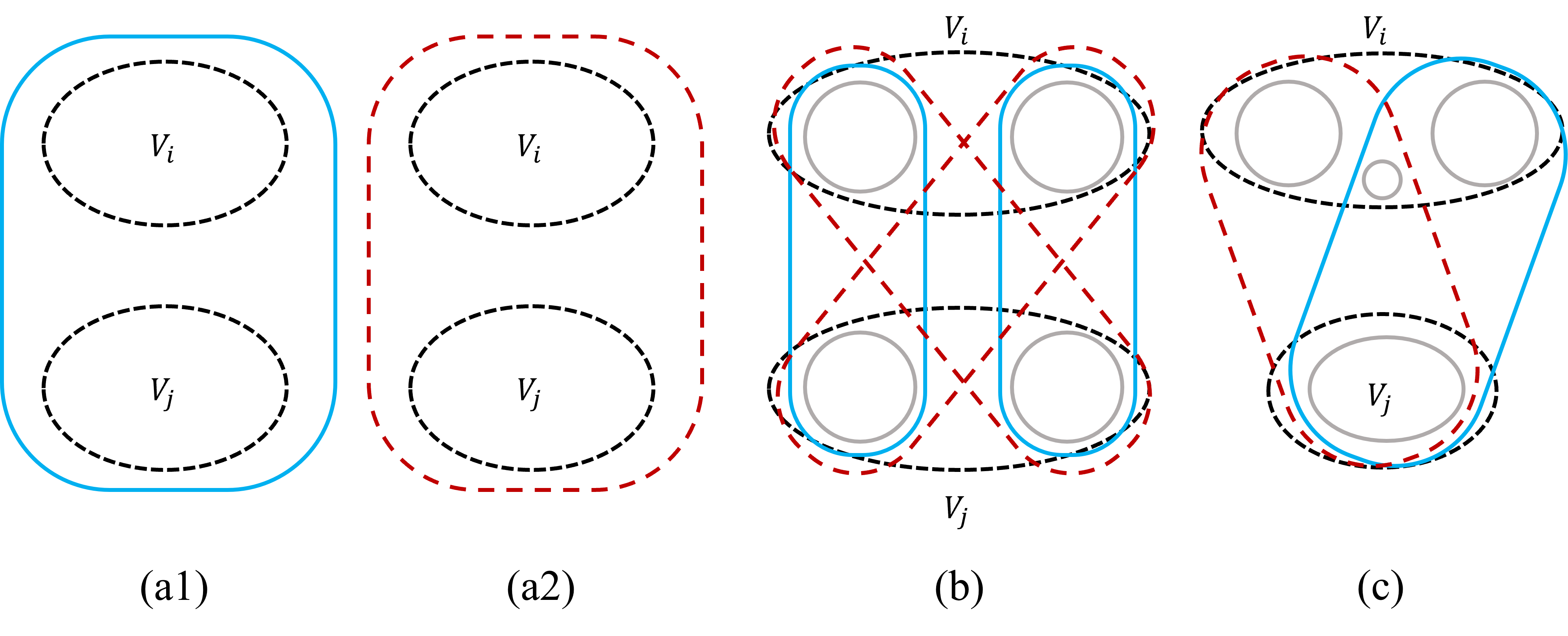}
    \caption{The cases between pairs in Lemma~\ref{lem:case3comps}}
    \label{fig:3complemma2}
\end{figure}

For any case constellation between the pairs $(V_i,V_j)$, we aim to exhibit at most two components other than $C_1,C_2,C_3$ covering $V_1\cup V_2\cup V_3$. Together with $C_1$, $C_2$, and $C_3$, this will yield a $2$-cover of the entire graph $K_n$ by at most $5$ components (since all vertices outside $V_1\cup V_2\cup V_3$ are in at least two of the $C_i$ and so are already $2$-covered). Applying Corollary~\ref{cor:fracbound} with $(\gamma,x)=(2, 5)$ would then give the claimed bound of
\[
z\geq \frac{2}{14+\sqrt{96}}.
\]

If case (a) (that is, either (a1) or (a2)) occurs for at least two pairs $(V_i,V_j)$, then the relevant components readily give a cover of $V_1\cup V_2\cup V_3$. Hence, from now on we can assume that this is not the case. 

Suppose case (a1) occurs on one of the pairs, say $(V_1,V_2)$. If $(V_1,V_3)$ satisfies case (b), then $V_1\cup V_2\cup V_3$ is covered by a single blue component. If $(V_1,V_3)$ satisfies case (c) (in either direction), then $V_1\cup V_2$ is covered by a blue component $B$ and $V_3\setminus B$ is covered by a non-blue component. Thus, we can assume from now on that case (a1) never happens.

If case (b) occurs, say on $(V_1,V_2)$, then one of the two remaining pairs, say $(V_1,V_3)$, must satisfy case (b) or (c). If $(V_1,V_3)$ satisfies case (b), then $V_1\cup V_2\cup V_3$ is covered by at most two blue components. The same is true if $(V_1,V_3)$ satisfies case (c) directed away from $V_1$. Lastly, if $(V_1,V_3)$ satisfies (c) directed toward $A_1$, then $V_1\cup V_2$ is covered by a blue component $B'$ and $V_3\setminus B'$ is covered by a non-blue component. Hence, we may also assume from now on that case (b) never happens. In other words, each pair satisfies either case (a2) or (c) and the former occurs at most once.  

If all three pairs $(V_i,V_j)$ satisfy case (c), then the respective `case (c) directions' result in an auxiliary $3$-vertex tournament, which is either cyclic or transitive. In the cyclic case we again have that $V_1\cup V_2\cup V_3$ is contained in a single blue component. In the transitive case, without loss of generality, let each $(V_i,V_j)$ for $i<j$ be directed toward $V_i$. Then again $V_1\cup V_2$ lies in a single blue component $B''$ and $V_3\setminus B''$ can be covered by a non-blue component.

Hence, we may assume that one pair, say $(V_1,V_2)$, is of type (a2), with $D$ being the non-blue component between them, and the other two pairs are of type (c). If either of these type (c) pairs is directed toward $V_3$, then $V_1\cup V_2\cup V_3$ is covered by $D$ and the corresponding blue component. Thus, the last case to consider is when both $(V_1,V_3)$ and $(V_2,V_3)$ are directed away from $V_3$. 

In that case, let $B_1$ and $D_1$ be the blue and non-blue components between $(V_1,V_3)$, respectively, and define $B_2$ and $D_2$ similarly with respect to $(V_2,V_3)$. If $B_1\cap B_2\cap V_3\neq \emptyset$, then $B_1$ and $B_2$ coalesce into a single component $B$ in $K_n$ and $V_3\setminus B$ can be covered by a non-blue component. On the other hand, if $B_1\cap B_2\cap V_3= \emptyset$, then 
$$D_1\cup D_2 \supseteq (V_3\setminus B_1)\cup (V_3\setminus B_2)= V_3$$
and, since $D_1$ and $D_2$ cover $V_1$ and $V_2$, respectively, we again obtain that
two components, namely $D_1$ and $D_2$, cover $V_1\cup V_2\cup V_3$. Thus, in every case, we have a component with at least $\frac{2}{14+\sqrt{96}} \binom{n}{2}$ edges, as desired.
\end{proof}

The lower bound of $\frac{2}{14+\sqrt{96}}$ in the conclusion of Lemma~\ref{lem:case3comps} can be improved slightly through a more careful analysis. However, we chose to omit this more involved proof, since the improved estimate is not needed for the tight case in Theorem~\ref{thm:4main}.

For any vertex $v$ and any component $C$ with $v\in C$, we can assume there is a vertex $w\in C$ not in any other component containing $v$; otherwise, we are done by Lemma~\ref{lem:case3comps}. In particular, we can assume that no component has its vertex set contained entirely within the vertex set of another component. 

For the sake of clarity, we say that two vertices $v_1$ and $v_2$ are \emph{joined} in a given color if the edge between them is of that color, while a set of vertices is \emph{connected} in a given color if they share a component of that color. 
Call two vertices $v_1$ and $v_2$ \emph{equivalent} if they are contained in a common component, that is, they are connected, in each color. Clearly, this is an equivalence relation, so we may speak of \emph{equivalence classes} with respect to it. Note that if $v_1$ and $v_2$ share exactly three components, then, considering a vertex $w$ outside of these three components (if there is no such $w$, we are again done by Lemma~\ref{lem:case3comps}), both $v_1$ and $v_2$ must be joined to $w$ via the fourth color, meaning that they share all four components, a contradiction. Hence, we can assume that any two vertices are either equivalent or share at most two components. If $v_1$ and $v_2$ share precisely two components, let us call them \emph{biconnected}.

The next two lemmas cover a pair of general cases where we again get a lower bound of at least $\frac{2}{14+\sqrt{96}} \binom{n}{2}$ edges.

\begin{lemma}\label{lem:disjoint}
If there is a pair of components of different colors that do not intersect, then there is a monochromatic component with at least $\frac{2}{14+\sqrt{96}} \binom{n}{2}$ edges.
\end{lemma}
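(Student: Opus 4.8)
The plan is to argue by contradiction: suppose every monochromatic component has fewer than $\frac{2}{14+\sqrt{96}}\binom{n}{2}$ edges, and contradict this by exhibiting a fractional $2$-cover of the vertex set $V$ by monochromatic components of total weight at most $5$ (a set of components with weights in $[0,1]$ summing to at most $5$ that covers every vertex to weight at least $2$), so that Corollary~\ref{cor:fracbound} with $(\gamma,x)=(2,5)$ forces $z\ge\frac{2}{14+\sqrt{96}}$. Write $C_1$ and $C_2$ for the two given disjoint components, say of colors red and orange, and put $V_1=V(C_1)$, $V_2=V(C_2)$, and $W=V\setminus(V_1\cup V_2)$. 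Since $C_1,C_2$ are monochromatic components of distinct colors, no edge from $V_1$ to $V_2\cup W$ is red and no edge from $V_2$ to $V_1\cup W$ is orange; in particular every edge between $V_1$ and $V_2$ is yellow or blue. By the contradiction hypothesis, Lemma~\ref{lem:case3comps} shows that no three distinctly colored components cover $V$, and the remarks preceding that lemma show that no component's vertex set is contained in another's.

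First dispose of the case $W=\emptyset$: Lemma~\ref{lem:2colorbip} applied to the yellow/blue bipartite graph between $V_1$ and $V_2$ yields yellow and blue components of total weight at most $2$ covering $V=V_1\cup V_2$ to weight at least $1$, and adding $C_1,C_2$ with weight $1$ gives a $2$-cover of weight at most $4$. Now assume $W\ne\emptyset$. The key local fact is that for any $w\in W$, with $R_w,O_w,Y_w,B_w$ the red/orange/yellow/blue components through $w$, the colors of the edges at $w$ give $V_1\subseteq O_w\cup Y_w\cup B_w$ and $V_2\subseteq R_w\cup Y_w\cup B_w$; since neither $V_1=V(C_1)$ nor $V_2=V(C_2)$ can sit inside a single component of another color, at least one of $Y_w,B_w$ meets $V_1$ and at least one meets $V_2$. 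A refinement: if $X\subseteq V_1$ lies in a single yellow component and a single blue component, and $w\in W$ lies in neither of them, then $X\subseteq O_w$ (and symmetrically for $V_2$ and $R_w$). Combined with the no-containment property, this pins down a bounded set of orange and red components that can serve as ``links'' for the vertices of $W$, and it forces a complete bipartite graph between $X$ and the relevant vertices of $W$ into each such linking component.

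The heart of the matter is a case analysis on Lemma~\ref{lem:2colorbip} applied to the yellow/blue bipartite graph between $V_1$ and $V_2$, the decisive case being case (b): $V_1=A\sqcup A'$, $V_2=B\sqcup B'$, and, as components of $K_n$, two yellow components $Y_1\supseteq A\cup B$ and $Y_2\supseteq A'\cup B'$ and two blue components $B_1\supseteq A\cup B'$ and $B_2\supseteq A'\cup B$, which we may take pairwise distinct (otherwise one color spans $V_1\cup V_2$). These four components already $2$-cover $V_1\cup V_2$, and they $2$-cover every $w\in W$ lying in two of them; a remaining $w$ lies in exactly one of the four, and the refinement forces the opposite-side orange (respectively red) component of $w$ to contain a whole part $A,A',B$ or $B'$, so the no-containment property leaves at most two orange and at most two red possible links. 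If at most one link is needed, then $Y_1,Y_2,B_1,B_2$ together with that link form a $2$-cover of $V$ by five components. If two links are needed, each of them contains a complete bipartite graph between a part of $V_1$ or $V_2$ and a positive fraction of $W$; feeding this, together with the bicliques $K_{A,B},K_{A',B'}$ (yellow) and $K_{A,B'},K_{A',B}$ (blue) coming from the $V_1$-to-$V_2$ edges, into a size comparison shows that one of these components already has at least $\frac{2}{14+\sqrt{96}}\binom{n}{2}$ edges. The remaining cases of Lemma~\ref{lem:2colorbip}, and the situations in which some yellow or blue component spans $V_1\cup V_2$, are handled by analogous but less delicate arguments, again combining $C_1,C_2$, the components supplied by Lemma~\ref{lem:2colorbip}, at most one or two linking components, and the key local fact to build a $2$-cover of weight at most $5$.

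The main obstacle is the coverage of $W$: the bipartite analysis between $V_1$ and $V_2$ carries no a priori information about $W$, so this coverage must be bootstrapped entirely from the key local fact and its refinement. The genuinely delicate point is to verify that, whenever the naive cover would require a sixth component, one can instead locate a component with at least $\frac{2}{14+\sqrt{96}}\binom{n}{2}$ edges; organizing the sub-cases of case (b) so that this dichotomy is exhaustive is where the real work lies.
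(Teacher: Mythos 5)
Your plan is genuinely different from the paper's. The paper first uses the no-containment observation (from the discussion after Lemma~\ref{lem:case3comps}) to rule out cases (a) and (c) of Lemma~\ref{lem:2colorbip} outright, leaving only case (b). It then observes that in case (b) the vertices of each of the two disjoint components split into exactly two equivalence classes, picks one representative of each of the four classes, and takes $\mathcal{X}$ to be the eight non-red, non-blue components through these representatives, checking the cover condition with $(\gamma,x)=(3,8)$. The crucial point is that every $u\notin R\cup B$ sees at least three colors on its edges to $R\cup B$, which buys $\gamma=3$ rather than $\gamma=2$ and yields the stronger $z\geq \frac{2}{17+\sqrt{33}}$. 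Your plan instead aims for a $2$-cover of total weight at most $5$, which would give exactly $\frac{2}{14+\sqrt{96}}$.

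The gap is in case (b) when, in your language, two ``links'' are needed, and this case does arise. Following your classification of $w\in W$ lying in exactly one of $Y_1,Y_2,B_1,B_2$: a $Y_1$-only $w$ can be rescued by $O_2\supseteq A'$ or $R_2\supseteq B'$, and a $Y_2$-only $w$ can be rescued by $O_1\supseteq A$ or $R_1\supseteq B$; these option sets are disjoint (by no-containment $O_1\neq O_2$ and $R_1\neq R_2$), so if both types occur no single linking component covers both, and you are forced to take six components in the cover. With $(\gamma,x)=(2,6)$, Proposition~\ref{prop:mainbound} only gives $z\geq \frac{2}{16+\sqrt{112}}<\frac{1}{12}$, which is not enough. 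The fallback ``size comparison'' does not repair this: all the bicliques you point to ($K_{A,B}$, $K_{A',B'}$, $K_{A,B'}$, $K_{A',B}$, and the parts-of-$V_i$-to-$W$ bicliques in the links) have sizes controlled by $|V_1|,|V_2|$ (and the number of $W$-vertices of each type), none of which are a priori bounded below; if $|V_1|,|V_2|=o(n)$ these are all $o(n^2)$, so no component is forced to be large. To close the gap you would need a further argument in the two-link situation — the cleanest fix is exactly the paper's: improve $\gamma$ to $3$ rather than trying to shave $x$ down to $5$.

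A smaller issue: you treat the remaining cases of Lemma~\ref{lem:2colorbip} as ``analogous but less delicate,'' but they are actually trivial once you invoke no-containment — case (a) puts $V_1\cup V_2$ inside one component and case (c) puts one of $V_1,V_2$ inside a component of another color, and in either case Lemma~\ref{lem:case3comps} applies immediately. Stating this explicitly would be better than waving at analogous arguments, and is how the paper dispenses with those cases.
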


\begin{proof}
By symmetry, we can assume that the two disjoint components $R$ and $B$ are red and blue, respectively. All edges between $R$ and $B$ are orange or yellow, so we can apply Lemma~\ref{lem:2colorbip} to the complete bipartite graph $G_{R,B}$ between them. If either $R$ or $B$ is contained entirely in a single orange or yellow component in $G_{R,B}$ and thus in $G$, we are done by Lemma~\ref{lem:case3comps}, so assume otherwise. Then only case~(b) of Lemma~\ref{lem:2colorbip} can apply, where there are exactly two orange components and two yellow components in $G_{R,B}$ (none of which can coalesce together in $G$). 
It is easy to see that in this case any two vertices in $R$ that share an orange component also share a yellow component and are therefore equivalent. Hence, there are exactly two equivalence classes of vertices in $R$ and, likewise, exactly two equivalence classes in $B$. Taking representatives $v_1,v_2\in R$ and $w_1,w_2\in B$ of these equivalence classes, let
\[
\mathcal{X}=\{C\in \mc:\: C\cap \{v_1,v_2,w_1,w_2\}\neq \emptyset \} \setminus \{R,B\}.
\]
Define $w(C)=1$ when $C\in \mathcal{X}$ and $w(C)=0$ otherwise. We claim that this choice of $w$ satisfies the conditions of Corollary~\ref{cor:fracbound} with $(\gamma,x)=(3,8)$. If $\{v_1,v_2\}\subseteq C$ for any component $C\neq R$, then $R\subseteq C$, a contradiction, so $v_1$ and $v_2$ do not share any non-red components; likewise, $w_1$ and $w_2$ do not share any non-blue components. Thus, $\mathcal{X}$ contains exactly two components of each color, so $\sum_{C\in \mc} w(C)= |X|=8$. Moreover, by construction, for $u\in R\cup B$, we have $\sum_{C\ni u} w(C)=3$.

Now consider a fixed $u\notin R\cup B$. Since the two equivalence classes in $R$ do not share any non-red components, there are at least two colors among the edges between $u$ and $R$ and, similarly, at least two colors among the edges between $u$ and $B$. If there are exactly two colors among the edges between $u$ and $R\cup B$, then these colors must be orange and yellow, a contradiction because $R\cup B$ cannot be covered by the union of an orange component and a yellow one. So there are at least three colors among the edges between $u$ and $R\cup B$, which means $\sum_{C\ni u} w(C)\geq 3$. Thus, we can apply Corollary~\ref{cor:fracbound} with $(\gamma,x)=(3,8)$, yielding a monochromatic component with at least $\frac{2}{17+\sqrt{33}} \binom{n}{2}>\frac{2}{14+\sqrt{96}}\binom{n}{2}$ edges, as desired.
\end{proof}

\begin{lemma}\label{lem:bic}
If there is a pair of biconnected vertices, then there is a monochromatic component with at least $\frac{2}{14+\sqrt{96}} \binom{n}{2}$ edges.
\end{lemma}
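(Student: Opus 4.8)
The plan is to mimic the structure of the proof of Lemma~\ref{lem:disjoint}: start from the pair of biconnected vertices $v_1, v_2$, identify the few relevant components, and build a fractional cover that lets us apply Corollary~\ref{cor:fracbound} with parameters good enough to beat $\tfrac{2}{14+\sqrt{96}}$. Say $v_1$ and $v_2$ lie in a common red component $R$ and a common blue component $B$ (so they are connected in red and in blue) but are in distinct yellow and distinct orange components. We may assume no component's vertex set is contained in another's, and, by Lemma~\ref{lem:case3comps}, no three components of distinct colors cover $V$; by Lemma~\ref{lem:disjoint} any two components of different colors intersect. The first step is to understand how an arbitrary vertex $u \in V$ interacts with $\{v_1, v_2\}$. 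Since $v_1$ and $v_2$ are in different yellow components, the edges $uv_1$ and $uv_2$ cannot both be yellow; likewise not both orange. Hence among the four edges from $u$ to $\{v_1,v_2\}$, at most one is yellow and at most one is orange, so at least two of them are red or blue.

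The second step is the cover construction. Let $Y_1, O_1$ be the yellow and orange components containing $v_1$, and $Y_2, O_2$ those containing $v_2$; together with $R$ and $B$ these are (at most) six components. Take $\mathcal{X}$ to be exactly these components, with weight $1$ on $R$ and $B$ and weight $\tfrac12$ on each of $Y_1,Y_2,O_1,O_2$, giving $x = 2 + 4\cdot\tfrac12 = 4$. For $u = v_1$ or $v_2$ the covered weight is $1+1+\tfrac12+\tfrac12 = 3$. For a general $u$: it lies in a common red component with at least one of $v_1,v_2$ only if that component is $R$ — but we must be careful, since $u$ being red-joined to $v_1$ does not force $u \in R$ unless $R$ is the unique red component through $v_1$, which it is. So if $uv_1$ is red then $u \in R$; similarly for blue and $B$. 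From step one, at least two of $uv_1, uv_2$ are red/blue. If one is red and one is blue, then $u \in R \cap B$, contributing weight $2$, and we still need weight $\geq 1$ more from the half-weight components; if instead (say) both $uv_1, uv_2$ are red then $u\in R$ and also, since not both of the remaining-color edges can repeat, $u$ picks up at least one of $Y_1, O_1$ or $Y_2, O_2$... this bookkeeping is exactly where the main work lies.

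\textbf{The main obstacle} is verifying that every vertex $u \notin R \cup B$ really does get fractional weight $\geq 3$, i.e. that the half-weights on $Y_1, Y_2, O_1, O_2$ suffice to top up whatever $R$ and $B$ contribute. The delicate subcase is when $u$ sees, say, red to both $v_1$ and $v_2$ (so $u \in R$, weight $1$ so far) but neither blue: then $uv_1$ and $uv_2$ use the colors yellow/orange between them, so $u$ shares a component with $v_1$ in one of those colors and with $v_2$ in one — but this could be, e.g., $Y_1$ via $v_1$ and $O_2$ via $v_2$, giving only $1 + \tfrac12 + \tfrac12 = 2$, which is not enough. This means the naive six-component cover fails and one must either (i) enlarge $\mathcal{X}$ by a seventh carefully chosen component and re-run Corollary~\ref{cor:fracbound} with $(\gamma,x)$ still favorable (note $x \leq 12 - 2\sqrt{12} \approx 5.07$, so we have very little room — at most one extra half-weight component, pushing $x$ to at most $4.5$), or (ii) argue that this bad subcase forces additional structure — for instance, that it cannot happen simultaneously for $u$ with both "directions", or that if it happens then $\{v_1, v_2\}$ is not a genuinely minimal biconnected pair and we can replace one of them. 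I expect the resolution to be that in the bad subcase one shows $Y_1$ and $O_2$ (or the relevant pair) must actually coalesce with $Y_2$ and $O_1$ respectively, or that $u$ can be absorbed by observing $u$ is biconnected or equivalent to $v_1$ or $v_2$, allowing an inductive or case-reducing step; pinning down exactly which of these mechanisms applies, and checking it across all symmetric variants, is the crux of the argument.
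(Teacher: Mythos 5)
Your proposal has a genuine gap, and you correctly identify it yourself: the fractional cover on $\{R, B, Y_1, Y_2, O_1, O_2\}$ with weights $(1,1,\tfrac12,\tfrac12,\tfrac12,\tfrac12)$ can fail to $2$-cover a vertex $u \notin B$ that sees red to both $v_1$ and $v_2$ and lands in, say, $Y_1$ and $O_2$ but not $Y_2$ or $O_1$. Neither of the two repair mechanisms you speculate about is what the paper does, and it is not clear that either can be made to work as stated: there is essentially no slack for a seventh component (you computed $x \leq 4.5$), and the ``bad'' vertex $u$ is not in general biconnected or equivalent to $v_1$ or $v_2$, so no simple absorption argument applies.

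The paper's proof takes a substantially different route after the opening moves. It does not attempt to build one fractional cover at all. Instead, with $v_1,v_2$ sharing red ($R$) and orange ($O$), and after disposing of easy subcases (few red or orange components) via Lemma~\ref{lem:case3comps} and Proposition~\ref{prop:mainbound}, it performs a structural analysis. Setting $D = B_1 \setminus (R\cup O)$ and $E = Y_1 \setminus (R\cup O)$, one shows $D$ and $E$ are nonempty, disjoint, and that $D = Y_2\setminus(R\cup O)$, $E = B_2\setminus(R\cup O)$; moreover $Y_i \cap B_i \setminus O = \emptyset$. Then every vertex of $R\setminus O$ lies in exactly one of $B_1, B_2, Y_1, Y_2$ and is consequently orange-joined to all of $D$ or to all of $E$. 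Using Lemma~\ref{lem:disjoint} to guarantee every orange component meets $R$, this forces exactly three orange components; the structure then forces all $D$--$E$ edges to be red, hence exactly two red components (using Lemma~\ref{lem:case3comps} to exclude a third). Next, $v_1$ and any $v_3 \in R\setminus O \cap B_1$ are themselves biconnected (via red and blue), so the same argument with the color roles permuted yields exactly three blue components and, iterating once more, exactly three yellow components. That gives $2+3+3+3 = 11$ components in total, and pigeonhole gives a component with at least $\frac{1}{11}\binom{n}{2} > \frac{2}{14+\sqrt{96}}\binom{n}{2}$ edges. In short, the paper abandons the fractional-cover strategy at exactly the point where you identify it breaking down, and replaces it with a direct bound on $|\mathcal{C}|$; your sketch stops short of finding this replacement.
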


\begin{proof}
Suppose that $v_1$ and $v_2$ are biconnected. By symmetry, we can assume that the two components they share are red and orange and that there are at least as many orange components as red components in our coloring. We can also assume that there are at least two red components; otherwise, we are done by Lemma~\ref{lem:case3comps}. If there are exactly two orange components, then applying Proposition~\ref{prop:mainbound} to the set of red and orange components with $(\gamma,x)=(2,4)$ yields $z \geq \frac{2}{12+\sqrt{80}}$ and we are again done, so we can assume that there are at least three orange components. Let $R$ and $O$ be the joint red and orange components, respectively, of $v_1$ and $v_2$ and, for $i\in\{1,2\}$, let the yellow and blue components containing $v_i$ be $Y_i$ and $B_i$, respectively. Let $T=V\setminus R$. 

Since $\{R,O,Y_1,B_1\}$ are the four components of $v_1$, their union is the whole of $V$. Hence, $D:=B_1\setminus (R\cup O)\neq \emptyset$, as otherwise $\{R,O,Y_1\}$ would satisfy the assumptions of Lemma~\ref{lem:case3comps} and, analogously, $E:=Y_1\setminus (R\cup O)\neq \emptyset$. Observe that $v_2$ must be joined in yellow to all vertices in $D$ and in blue to all vertices in $E$. This means that $D\cap E=\emptyset$ and, consequently, $v_1$ is joined to all of $D$ in blue and to all of $E$ in yellow. Furthermore, $E=B_2\setminus (R\cup O)$ and $D=Y_2\setminus (R\cup O)$.

\begin{figure}[h]
    \centering
    \includegraphics[scale=0.6]{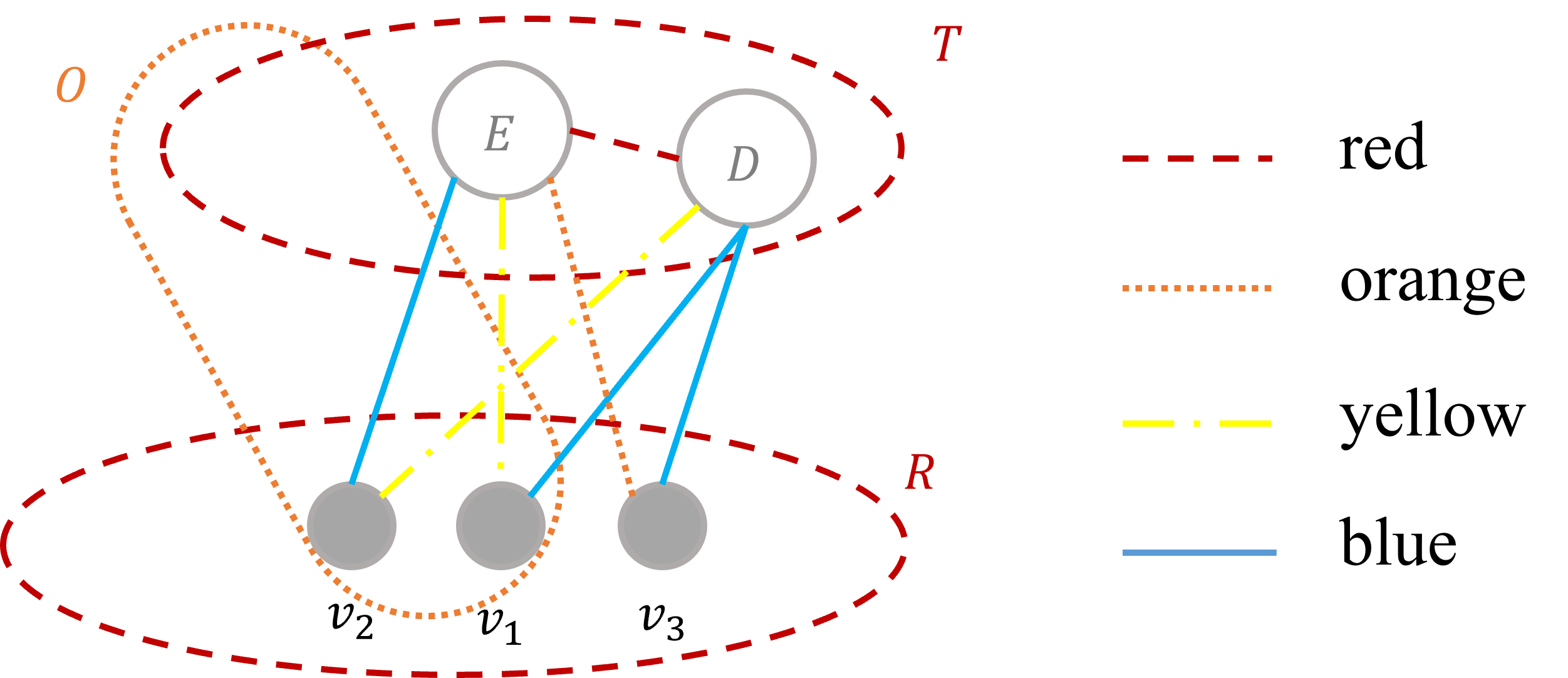}
    \caption{The situation in Lemma~\ref{lem:bic}}
    \label{fig:3complemma3}
\end{figure}

If there is a vertex $w\in Y_1\cap B_1\cap R\setminus O$, then $w$ shares exactly three components with $v_1$, a contradiction. Hence, we may conclude that 
\begin{equation}\label{eq:rbicon}
Y_1\cap B_1\setminus O=Y_1\cap B_1\setminus (R\cup O)=D\cap E=\emptyset
\end{equation}
and, similarly, $Y_2\cap B_2\setminus O=\emptyset$.

Now consider a vertex $v_3\in R\setminus O$. If $v_3$ is joined in orange to all of $T\setminus O=D\cup E$, then either there are only two orange components ($O$ and the component of $v_3$) or there is an orange component whose vertex set is contained within $R$; either case is a contradiction. Therefore, $v_3$ is joined in blue or yellow to some vertex in $T\setminus O$, which means it is in one of $B_1, B_2, Y_1, Y_2$. By symmetry, we can assume it is in $B_1$, so by~\eqref{eq:rbicon} we have $v_3\notin Y_1$. We conclude that $v_3$ must be joined to all of $E$ in orange (see Figure~\ref{fig:3complemma3}). Since $v_3$ was chosen from $R\setminus O$ arbitrarily, every vertex in $R\setminus O$ is joined either to all of $D$ or to all of $E$ in orange. If any orange component does not intersect $R$, we are done by Lemma~\ref{lem:disjoint}. Otherwise, we have at most three orange components in total, so by our earlier assumption it must be exactly three. In particular, there will be a vertex in $R\setminus O$ connected in orange to all of $D$ and none of $E$. It follows that every edge between $D$ and $E$ must be red, so all of $D \cup E=V\setminus (R\cup O)$ is contained in a single red component. If there is a third red component, it must lie entirely inside $O$, so we are done by Lemma~\ref{lem:case3comps}. Therefore, we can assume there are exactly two red components.

Next, note that $v_1$ and $v_3$ share components in red and blue but not in orange, which means $v_1$ and $v_3$ are biconnected. Therefore, by the same argument as above (with orange and blue swapped), the total number of blue components is also three. Let $B_3$ be the third blue component and note that $B_3\subseteq R\cup O$. Consequently, we must have $B_3\cap R\setminus O\neq \emptyset$, as otherwise $B_3$ would be contained in $O$. Let $v_4\in B_3\cap R\setminus O$ be an arbitrary vertex. By using similar reasoning as for $v_3$, we have that $v_4$ is in one of $B_1,B_2,Y_1,Y_2$; since $v_4\in B_3$, it must be in one of $Y_1$ and $Y_2$. We conclude, as before for $v_3$ and blue, that there are exactly three yellow components. 

Thus, we have exactly two red components and exactly three components of every other color, for a total of exactly $11$ monochromatic components. By the pigeonhole principle, one of these components must have at least $\frac{1}{11}\binom{n}{2}>\frac{2}{14+\sqrt{96}}\binom{n}{2}$ edges, as needed.
\end{proof}

We are now ready to complete the proof of our main theorem.

\begin{proof}[Proof of Theorem~\ref{thm:4main}]
If the hypotheses of any of Lemmas~\ref{lem:case3comps},~\ref{lem:disjoint}, or~\ref{lem:bic} hold, we have a component with at least $\frac{2}{14+\sqrt{96}}\binom{n}{2}$ edges, as needed, so we can assume otherwise. Thus, every pair of components of different colors intersects and any pair of vertices sharing at least two components are equivalent.

We claim that in this case there are at most three components of each color. Without loss of generality, let red be a color with the fewest components. Let $R$ be a red component and fix a vertex $v\notin R$. Let $O$, $Y$, and $B$ be the orange, yellow, and blue components of $v$, respectively. By Lemma~\ref{lem:bic}, we can assume that all vertices in $R\cap O$ are equivalent and similarly for $R\cap Y$ and $R\cap B$. However, $O\cup B\cup Y\supseteq R$, as every vertex in $R$ is adjacent to $v$ via a non-red edge. So the vertices of $R$ can be partitioned into at most three equivalence classes. Thus, there are at most three components of each color that intersect $V(R)$ and since, by Lemma~\ref{lem:disjoint}, we can assume that $R$ intersects every non-red component, there are at most three components of each color in total, as claimed.

If there is a color with fewer than three components, then $|\mc|\leq 11$ and there is some component with at least $\frac{1}{11}\binom{n}{2}$ edges. Otherwise, we will show that our coloring matches the extremal construction claimed. Indeed, by assumption, the equivalence class of a vertex is determined by its red and orange components. Let $R=R_1,R_2,R_3$ be the red components and $O_1,O_2,O_3$ the orange components and let $V_{ij}=R_i\cap O_j$ for $1\leq i,j\leq 3$. The $V_{ij}$ form a partition of $V(G)$ and each is an equivalence class of vertices. Since we can assume that every pair of components of different colors intersects, the $V_{ij}$ are all non-empty.

Every component contains exactly three equivalence classes: one for each orange component if it is red and one for each red component if it is not red. Each pair of the three vertex sets $V_{11},V_{22},V_{33}$ must share at least one component, each of which must be yellow or blue; one of the two colors occurs at least twice, say yellow. Then $V_{11}\cup V_{22}\cup V_{33}$ is connected in yellow and in fact must form a yellow component, since each yellow component contains exactly three equivalence classes. Then neither $V_{13}$ nor $V_{31}$ shares a red, orange, or yellow component with $V_{22}$, so $V_{13} \cup V_{22} \cup V_{31}$ forms a blue component and, similarly, so do $V_{12} \cup V_{21} \cup V_{33}$ and $V_{11}\cup V_{23}\cup V_{32}$. Repeating this argument shows that $V_{12}\cup V_{23}\cup V_{31}$ forms another yellow component, as does $V_{13}\cup V_{21}\cup V_{32}$. Hence, we are in exactly the extremal configuration claimed.

Thus, only the claimed extremal configuration can have fewer than $\frac{2}{14+\sqrt{96}}\binom{n}{2}$ edges in every component and in this extremal configuration, since there are $12$ components, we instead get a lower bound of $\frac{1}{12}\binom{n}{2}$ edges, as desired.
\end{proof}

As in \cite{conlon2021ramsey} and \cite{luo2021connected}, it is possible to amend our argument to show that every $4$-coloring of $K_n$ contains a monochromatic trail or circuit of length at least $(\frac{1}{24} + o(1))n^2$. We omit the details, but briefly note the main idea, which is to delete a sparse subgraph in order to guarantee that each component is Eulerian and then work around these omitted edges.

\vspace{3mm}

\bibliographystyle{acm}
\bibliography{main}

\end{document}